\tikzstyle directed=[postaction={decorate,decoration={markings, mark=at position .65 with {\arrow{stealth}}}}]
\tikzstyle reverse directed=[postaction={decorate,decoration={markings,mark=at position .65 with {\arrowreversed{stealth};}}}]
\numberwithin{figure}{section}
\theoremstyle{thmstyleone}%
\newtheorem{lemma}{Lemma}[section]
\newtheorem{corollary}[lemma]{Corollary}
\newtheorem{theorem}[lemma]{Theorem}
\newtheorem{proposition}[lemma]{Proposition}
\newtheorem{definition}[lemma]{Definition}
\newtheorem{example}[lemma]{Example}
\def\T+{{\mathbb T_d^+}}
\def\A{\mathcal{A}}
\def\span{\mathop{\rm span}}
\begin{document}

\title[Hilbert evolution algebras, weighted digraphs, and nilpotency]{Hilbert evolution algebras, weighted digraphs, and nilpotency}


\author[1]{\fnm{Paula} \sur{Cadavid}}\email{paula.cadavid@ufrpe.br}

\author[2]{\fnm{Pablo M.} \sur{Rodriguez}}\email{pablo@de.ufpe.br}

\author*[3]{\fnm{Sebastian J.} \sur{Vidal}}\email{svidal@unpata.edu.ar}

\affil[1]{\orgdiv{Departamento de Matem\'atica}, \orgname{Universidade Federal Rural de Pernambuco}, \orgaddress{\street{Rua Dom Manuel de Medeiros, s/n, Dois Irmãos}, \city{Pernambuco}, \postcode{52171-900}, \state{PE}, \country{Brazil}}}

\affil[2]{\orgname{Centro de Ci\^encias Exatas e da Natureza, Universidade Federal de Pernambuco}, \orgaddress{\street{Av. Prof. Moraes Rego - Cidade Universit\'aria}, \city{Recife}, \postcode{1235}, \state{PE}, \country{Brazil}}}

\affil[3]{\orgdiv{Departamento de Matem\'atica}, \orgname{ Facultad de Ingenier\'ia, Universidad Nacional de la Pa\-ta\-go\-nia ``San Juan Bosco''}, \orgaddress{\street{Ruta Prov. No. 1, S/N, Ciudad Universitaria, Km 4}, \city{Comodoro Rivadavia}, \postcode{9000}, \state{Chubut}, \country{Argentina}}}


\abstract{
Hilbert evolution algebras generalize evolution algebras through a framework of Hilbert spaces. In this work we focus on infinite-dimensional Hilbert evolution algebras and their representation through a suitably defined weighted digraph. By means of studying such a digraph we obtain new properties for these structures extending well-known results related to the nilpotency of finite dimensional evolution algebras. We show that differently from what happens for the  finite dimensional evolution algebras, the notions of nil and nilpotency are not equivalent for Hilbert evolution algebras. Furthermore, we exhibit necessary and sufficient conditions under which a given Hilbert evolution algebra is nil or nilpotent. Our approach includes  illustrative examples.}

\keywords{Evolution Algebra, Hilbert Evolution Algebra, Nilpotency, Nil, Infinite Graph, Hilbert Space}


\pacs[MSC Classification]{17D99, 05C25, 05C63}

\maketitle

\section{Introduction}
\subsection{An overview of evolution algebras} Since its formulation during the second half of 2000s, by \cite{tv,tian}, the Theory of Evolution Algebras has become a subject of increasing interest and active research. On one hand, this theory attracts great interest because of the many connections with other fields like Probability, Dynamical Systems and Graphs, among others. On other hand, these are non-associative algebras, which as a special class of genetic algebras, exhibit properties whose proofs are non-trivial and in many cases induce new ideas and open problems. An  evolution algebra is defined as follows.

\bigskip
\begin{definition} \cite[Definition 3]{tian}
\label{def:evolalgTian} Let  $\mathbb{K}$  be a field and let $\A:=(\A,\cdot\,)$ be a $\mathbb{K}$-algebra. We say that $\A$ is an evolution algebra if it admits a basis $S:=\{e_i\}_{i\in\Lambda}$, such that  
\begin{equation}
e_i \cdot e_i =\displaystyle \sum_{k\in\Lambda} c_{ik} e_k
\end{equation}
and $e_i \cdot e_j = 0$  for $i,j\in \Lambda$ such that $i\neq j$, where the scalars $c_{ik}$ are called structural constants.  
\end{definition}
\bigskip

Although it is not possible to include the many existing and relevant references about the subject in a few lines, we shall highlight some of them and we refer the reader to the references therein. Some of the main lines of research are focused on the study of different mathematical properties like the classification of evolution algebras \cite{Cabrera/Siles/Velasco}, the characterization of their derivation spaces \cite{PMP2,PMPY,camacho/gomez/omirov/turdibaev/LM2013,YPT}, and the analysis of related co-algebras \cite{Paniello-EC,Paniello-BECCP,Paniello-IEOGC}. Regarding connections with other fields, one can refer the reader to the following works \cite{PMP, PMPT, rozikov-velasco, Paniello-MEA,Mukhamedov-Qaralleh,ceballos}. The link with Probability Theory is through the concept of discrete-time Markov chain, this was suggested first by \cite[Chapter 4]{tian}, where many properties of Markov chains were translated in the language of evolution algebras. Such connection has been explored in different ways, for example, by \cite{casas-ladra-rozikov,Paniello-MEA,Mukhamedov-Qaralleh}, and also inspired new lines of research like in \cite{vidal-IJPAM}. The interplay between evolution algebras and dynamical systems is through the interpretation of the evolution operator of the algebra, which is defined as the 
endomorphism $L:\A\to \A$, such that $L(e_i)=e_i^2$, for any $i\in\Lambda$. In this direction, the reader may find interesting results in \cite{DOR,rozikov-velasco}. Evolution Algebras and Graphs is maybe one of the relations which more contribute to the study of the first ones. There are two ways to connect both subjects. The first one is, given a graph, to define an associated evolution algebra. One way to do it is to identify the structural constants $\{c_{ik}\}_{i,k\in\Lambda}$ with the elements of the adjacency matrix of the respective graph. In \cite{PMP,PMPT} the authors address one of the open problems suggested by \cite{tian}; namely, to find the connection between the evolution algebras associated to a graph and the one associated to the symmetric random walk on the same graph. The other direction in the connection of these algebras with graphs is, given an evolution algebra, to associate a (weighted) digraph. This is done in a natural way; namely, vertices represent generators and directed edges between two vertices, say $i$ and $k$, represent that  $c_{ik}\neq 0$. Moreover, a weighted graph is defined by adding the weight $c_{ik}$ to the directed edge from $i$ to $k$. The first in proposing such an identification were \cite{Elduque/Labra/2015,Elduque/Labra/2016}, where the authors used properties of the resulting graph to obtain new algebraic results on evolution algebras as well as some new and intuitive proofs of well-known results. Recently, such ideas were also explored by \cite{PMPY,YPT,Cabrera/Siles/Velasco, ceballos, Qaralleh-Mukhamedov}. The purpose of our work is to advance with this connection by identifying a suitable defined (weighted) digraph to a Hilbert evolution algebra, which was introduced by \cite{vidal-IJPAM} as a generalization of the concept of evolution algebras through a framework of Hilbert spaces.

\bigskip
\begin{definition}\label{def:evolalg} \cite[Definition 2]{vidal-IJPAM} Let $\A=(\A,\langle\cdot, \cdot\rangle)$ be a real or complex separable Hilbert space  which is provided with an algebra structure by the product $\cdot:\A\times\A\rightarrow \A$. We say that $\A:=(\A,\langle\cdot, \cdot\rangle,\cdot \,)$ is a separable Hilbert evolution algebra if it satisfies the following conditions:
\begin{enumerate}[label=(\roman*)]
\item \label{natural} There exists an orthonormal basis $\{e_i\}_{i\in\mathbb{N}}$ and scalars $\{c_{ik}\}_{i,k\in\mathbb{N}}$, such that 
\begin{equation}\label{eq:ea03}
e_i \cdot e_i =\displaystyle \sum_{k=1}^{\infty} c_{ik} e_k
\end{equation}
and
\begin{equation}\label{eq:ea04}
e_i \cdot e_j=0, \text{ if }i\neq j,
\end{equation} 
for any $i,j\in \mathbb{N}$. 
\smallskip
\item For any $v\in\A$, the left multiplications $L_v:\A\longrightarrow \A$ defined by $L_{v}(w):= v\cdot w$ are continuous in the metric topology induced by the inner product; i.e., there exist constants $M_v>0$ such that
\begin{equation}\label{eq:continuity condition}\nonumber
\|L_v(w)\|\leq M_v \|w\|,\text{ for all } w\in \A.
\end{equation}
\end{enumerate}
\end{definition}
\bigskip
A basis for $\A$, not necessarily orthonormal, satisfying \eqref{eq:ea03} and \eqref{eq:ea04} is called natural basis and the scalars $\{c_{ik}\}_{i,k\in\mathbb{N}}$ are called structural constants. In the following we will omit the word separable and simply say Hilbert evolution algebras, since we will only be working with separable Hilbert spaces.

The Definition \ref{def:evolalg} allows us to deal with a wide class of infinite dimensional spaces, as pointed out by \cite{vidal-IJPAM}, where the authors discuss a connection with discrete-time Markov chains with countable state space.

\subsection{Contribution and organization of the paper}

In \cite{vidal-SMJ} the authors study Hilbert evolution algebras associated to a graph and obtain results which extend to graphs with infinitely many vertices a theory developed by \cite{PMP,PMPT} for evolution algebras associated to finite graphs. In this work we shall explore the another direction; namely, given a Hilbert evolution algebra $\A$ we shall associate a suitably defined weighted digraph. Thanks to such a connection we will be able to extend to Hilbert evolution algebras well-known results related to the nilpotency of finite dimensional evolution algebras. We show that, in contrast to the finite dimensional case, the notions of nil and nilpotency are no longer equivalent for Hilbert evolution algebras. Furthermore, we exhibit necessary and sufficient conditions under which a given Hilbert evolution algebra is nil or nilpotent. We organize the paper as follows. Section \ref{preliminaries} is concerned with establishing some basic notation and definitions related to infinite weighed digraphs. In Section \ref{digraph_of_a_HEA} we introduce the associated weighted digraph of a given Hilbert evolution algebra, and we discuss some properties and examples. Section \ref{nilpotency} is devoted to extend to Hilbert evolution algebras the discussion of \cite{Elduque/Labra/2015} related to the nilpotency of evolution algebras. We also illustrate our results with some examples. 


\section{Preliminaries on infinite weighted digraphs}\label{preliminaries}

\subsection{Basic notation}
Let us start with some nomenclature of Graph Theory. A directed graph, or digraph, $G=(V,E)$ consists of a pair of countable sets: the vertex set $V$ and the directed edge set $E$, where a directed edge is an ordered pair of vertices. We say that $G$ is infinite if $V$ is an infinite countable set and in this case we take $V=\mathbb{N}$. A weighted digraph $G=(V,E,w)$ is a digraph endowed with a map $w:E\longrightarrow\mathbb{K}$ denoted by $w(i,j):=w_{ij}$. Note that we can always think a digraph as a weighted digraph with $w_{ij}=1$, if there is a directed edge from $i$ to $j$,   and $w_{ij}=0$,  otherwise.  
A  finite path (or a path for short) in a directed graph $G=(V, E)$ is a sequence $\{v_1,\ldots,v_n\}$ of vertices such that $(v_{i},v_{i+1}) \in E$  for $i\in\{1,\ldots ,n-1\}$.
Such a path determines a sequence of edges $\alpha_{1} =(v_{1}, v_{2}), \ldots, \alpha_{n-1}=(v_{n-1}, v_{n})$. The number of edges determined by a finite path is called the length of the path. A (finite) weak path is a sequence  $\{v_1,\ldots,v_n\}$ of vertices such that $(v_{i-1},v_i) \in E$ or $(v_{i},v_{i-1})\in E$  for any $i\in\{1,\ldots,n\}$. An oriented cycle is a non-empty (finite) path in which the only repeated vertices are the first and last vertices.  
If $v$ is reachable from $u$ through a path the distance between $u$ and $v$, denoted by $\mathrm{dist}(u,v)$, is the length of the shortest path connecting $u$ and $v$. A ray is a sequence $\{v_1,\ldots, v_n, \ldots\}$ formed by an infinite number of vertices such that $(v_i, v_{i+1}) \in E$ for all $i\in \mathbb{N}$.  


Given a digraph $G=(V,E)$, the first-generation descendants of $i_0\in V$ is the set given by $D^1(i_0):=\{k\in \mathbb{N}: (i_0,k)\in E \}$. In addition, given a subset $U\subset V$, we let $D^1(U):=\{k\in \mathbb{N}: k \in D^1(i) \text{ for some } i \in U\}$. Similarly, we say that $j$ is a second-generation descendant of $i_0$, if $j\in D^1(k)$ for some $k\in D^1(i_0)$. Therefore, 
$$D^2(i_0):= \bigcup_{k\in D^1(i_0)}D^1(k). $$
Recursively, we define the m$th-$generation descendants of $i_0$ as 
$$D^m(i_0):= \bigcup_{k\in D^{m-1}(i_0)}D^1(k). $$
Finally, the set of descendants of $i_0$ is defined as 
$$D(i_0):= \bigcup_{m\in \mathbb{N}}D^m(i_0). $$
By convention we put $D^0(i_0):=\{i_0\}$ and $D^0(U):=U$. We have similar definitions for the ascendants of $i_0$, which will be denoted by $A^1(i_0):=\{k\in \mathbb{N}: (k, i_0)\in E \}$, $A^1(U)$, $A^m(i_0)$ and $A^{0}(i_0)$ for $i_0\in V$ and $U\subset V$. In this work we shall use the notation $G_i$ to represent, for any $i\in\mathbb{N}$, the graph $(V_i, E_i)$ with $V_i:=D(i)$ and $E_i=\{(k, \ell): k, \ell \in V_i$ and $(k, \ell)\in E\}$. Moreover, we define the depth of $G_i$ as $\delta(G_i):=\sup\{\mathrm{dist}(i,u):u\in D(i)\}$, and we let $\delta(G_i):=\infty$ if the supremum  does not exist. 
Also, given a vertex $i\in V$, we define $\deg^+(i)$ as the cardinality of the set $D^{1}(i)$  and $\deg^-(i)$ as the cardinality of $A^{1}(i)$. If we put $a_{ik}:=1$ whenever $(i,k)\in E$ and $a_{ik}:=0$ otherwise, then 
$$\deg^+(i)=\sum_{k=1}^\infty a_{ik}
\quad \text{ and }\quad \deg^-(i)=\sum_{k=1}^\infty a_{ki}.$$
With this we define $\deg(i):=\deg^+(i)+\deg^-(i)$ and we say that the $G$ is locally finite if $\deg^{+}(i)<\infty$ for all $i\in V$.

\subsection{Operators associated to an infinite digraph}

Following \cite{Fujii, Exner}, we introduce the weighted adjacency operator and the adjacency operator associated to a given infinite weighted digraph, denoted by $G=(V,E,w)$.
In this framework, the adjacency matrix of $G$ is denoted by $(a_{ik})_{i,k\in\mathbb{N}}$ and the weighted adjacency matrix of $G$ is denoted by $(\omega_{ik})_{i,k\in\mathbb{N}}$. Both can be interpreted as operators $A$, $\Omega$ densely defined in $\ell^2(\mathbb{N})$
under appropriate assumptions. 
Let us analyze this.
If $D_0$ is the dense subset of $\ell^2(\mathbb{N})$ formed by those sequences with only a finite number of non-zero components, we can define the operators $\Omega_0,\Gamma_0:\ell^2(\mathbb{N})\longrightarrow \ell^2(\mathbb{N})$ by
\begin{equation*}
\Omega_0(\delta_i):=\sum_{k=1}^{\infty}\omega_{ik} \delta_k
\quad\text{ and } \quad
\Gamma_0(\delta_i):=\sum_{k=1}^{\infty}\overline{w_{ik}} \delta_k
\end{equation*} 
for all $i\in\mathbb{N}$, where we denote by $\delta_k:=\{\delta_{ik}\}_{i\in\mathbb{N}}$ the standard orthonormal basis of $\ell^2(\mathbb{N})$, 
and we extend by linearity to all $D_0$, giving the domains $D(\Omega_0)=D(\Gamma_0)=D_0$. 
That is, $\Omega_0$ and $\Gamma_0$ are densely defined.
Now, we define the weighted adjacency operator $\Omega:\ell^2(\mathbb{N})\longrightarrow\ell^2(\mathbb{N})$ for  $v=\sum_{i=1}^{\infty}v_i\delta_i$ by 
\begin{equation}
    \Omega(v):=\sum_{k=1}^\infty \sum_{i=1}^\infty v_i \omega_{ik}\delta_k.\end{equation}
The corresponding domain is given by
\begin{equation} \nonumber
D(\Omega):=\biggl \{v=\sum_{i=1}^{\infty}v_i\delta_i\in \ell^2(\mathbb{N})\,:\, \sum_{k=1}^{\infty}\biggl|\sum_{i=1}^{\infty}
v_i \omega_{ik} \biggr|^2 <\infty \biggr\}.
\end{equation}
Also, we define the operator $\Gamma:\ell^2(\mathbb{N})\longrightarrow\ell^2(\mathbb{N})$ by
\begin{equation}\nonumber
\Gamma(v):=\sum_{k=1}^\infty \sum_{i=1}^\infty v_i\overline{\omega_{ki}} \delta_k
\end{equation}
with domain
\begin{equation}\nonumber
D(\Gamma):=\biggl \{v=\sum_{i=1}^{\infty}v_i\delta_i\in \ell^2(\mathbb{N})\,:\, \sum_{k=1}^{\infty}\biggl|\sum_{i=1}^{\infty}v_i \overline{\omega_{ki}} \biggr|^2 <\infty \biggr\}.
\end{equation}
Note that $D(\Omega)$ and $D(\Gamma)$ are linear subspaces of $\ell^2(\mathbb{N})$.
Similar considerations apply to the definition of the adjacency operator $A:\ell^2(\mathbb{N})\longrightarrow\ell^2(\mathbb{N})$ by
\begin{equation}\nonumber
A(v):=\sum_{k=1}^\infty \sum_{i=1}^\infty v_i a_{ik}\delta_k,   \text{ where } v=\sum_{i=1}^{\infty}v_i\delta_i
\end{equation}
and the operator $B:\ell^2(\mathbb{N})\longrightarrow\ell^2(\mathbb{N})$ by
\begin{equation}\nonumber
B(v):=\sum_{k=1}^\infty \sum_{i=1}^\infty v_i\overline{a_{ki}} \delta_k .
\end{equation}
The corresponding domains are given by
\begin{equation}\nonumber
D(A):=\biggl \{v=\sum_{i=1}^{\infty}v_i\delta_i\in \ell^2(\mathbb{N})\,:\, \sum_{k=1}^{\infty}\biggl|\sum_{i=1}^{\infty}
v_i a_{ik} \biggr|^2 <\infty \biggr\}
\end{equation}	
and
\begin{equation}\nonumber
D(B):=\biggl \{v=\sum_{i=1}^{\infty}v_i\delta_i\in \ell^2(\mathbb{N})\,:\, \sum_{k=1}^{\infty}\biggl|\sum_{i=1}^{\infty}v_i \overline{a_{ki}} \biggr|^2 <\infty \biggr\}.
\end{equation}
Note that $A$ and $B$ can be obtained by taking $\omega_{ik}=a_{ik}$ in $\Omega$ and $\Gamma$, respectively.

\bigskip
\begin{proposition}
\label{prop:densely def condition}
Let $G=(V,E,\omega)$ be an infinite weighted digraph.
Suppose that
\begin{equation}
\label{eq:densely def condition 01}
\sum_{k=1}^{\infty} |\omega_{ik}|^2 <\infty \,,\,\text{for all } i\in\mathbb{N}.
\end{equation}
Then the weighted adjacency operator $\Omega$ is densely defined.
Furthermore, $\Omega^*\subset \Omega_0^*=\Gamma$ and $\Gamma$ is closed.
Analogously, if
\begin{equation}\label{eq:densely def condition 02}
\sum_{i=1}^\infty |\omega_{ik}|^2 <\infty 
\,,\,\text{for all } k\in\mathbb{N}
\end{equation}
then $\Gamma$ is densely defined.
Furthermore, $\Gamma^*\subset \Gamma_0^*=\Omega$ and $\Omega$ is closed. For the  adjacency operator $A$ and for the operator  $B$ we have similar conclusions.
\end{proposition}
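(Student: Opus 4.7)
The plan is to handle the four conclusions in turn and then invoke symmetry for the analogues involving $\Gamma$, $A$ and $B$. First I would verify that $\Omega$ is densely defined under \eqref{eq:densely def condition 01}: by that hypothesis, each formal image $\sum_{k=1}^{\infty}\omega_{ik}\delta_k$ actually lies in $\ell^2(\mathbb{N})$, so every basis vector $\delta_i$ belongs to $D(\Omega)$; linearity then yields $D_0 \subseteq D(\Omega)$, and since $D_0$ is already dense in $\ell^2(\mathbb{N})$ so is $D(\Omega)$.

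Next I would establish the adjoint identities. The inclusion $\Omega^* \subset \Omega_0^*$ is the standard consequence of the containment $\Omega_0 \subset \Omega$, since taking adjoints reverses extensions for densely defined operators. The substantive step is the identity $\Omega_0^* = \Gamma$. For $u \in \ell^2(\mathbb{N})$ and the basis vector $w=\delta_i \in D_0$ I would compute
\[
\langle \Omega_0 \delta_i, u\rangle = \Big\langle \sum_{k=1}^{\infty} \omega_{ik}\delta_k,\, u\Big\rangle = \sum_{k=1}^{\infty} \omega_{ik}\,\overline{u_k},
\]
and extend by linearity to obtain $\langle \Omega_0 w, u\rangle = \sum_i w_i\, b_i(u)$ for any $w=\sum_i w_i \delta_i \in D_0$, where $b_i(u):=\sum_{k=1}^{\infty} \omega_{ik}\overline{u_k}$. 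Appealing to the Riesz representation theorem, the functional $w \mapsto \sum_i w_i b_i(u)$ extends continuously from $D_0$ to all of $\ell^2(\mathbb{N})$ exactly when the sequence $(b_i(u))_i$ belongs to $\ell^2(\mathbb{N})$, and in that case equals $\langle w, v\rangle$ with $v_i = \overline{b_i(u)}=\sum_{k=1}^{\infty} \overline{\omega_{ik}}\, u_k$. A direct relabelling of dummy indices identifies this $v$ with $\Gamma u$, and the summability condition on $(b_i(u))_i$ with the defining condition of $D(\Gamma)$. Hence $D(\Omega_0^*)=D(\Gamma)$ and $\Omega_0^* u =\Gamma u$.

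Closedness of $\Gamma$ then comes for free: $\Gamma=\Omega_0^*$, and the adjoint of any densely defined operator is automatically closed. The dual statements under \eqref{eq:densely def condition 02} follow by running the same argument with the roles of rows and columns interchanged (equivalently, with $\Gamma_0$ in place of $\Omega_0$), and the conclusions for $A$ and $B$ come from specializing to $\omega_{ik}=a_{ik}$. The main technical care is in the adjoint computation: one must confirm both that each inner sum $b_i(u)$ converges, which is immediate from the Cauchy--Schwarz inequality together with $\Omega_0\delta_i\in \ell^2(\mathbb{N})$, and that the resulting sequence of those sums lies in $\ell^2(\mathbb{N})$. Once those index bookkeeping issues are settled, matching with $D(\Gamma)$ is essentially tautological and nothing deeper is required.
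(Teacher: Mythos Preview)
Your proposal is correct and follows essentially the same approach as the paper: both arguments show $D_0\subset D(\Omega)$ from \eqref{eq:densely def condition 01}, compute $\langle \Omega_0\delta_i,u\rangle$ to identify $\Omega_0^*$ with $\Gamma$, and then invoke that adjoints of densely defined operators are closed. The only cosmetic difference is that the paper splits the identity $\Omega_0^*=\Gamma$ into the two containments $\Gamma\subset\Omega_0^*$ and $D(\Omega_0^*)\subset D(\Gamma)$, whereas you package both directions at once via the Riesz-type characterization of when the functional $w\mapsto\sum_i w_i\,b_i(u)$ on $D_0$ is bounded.
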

\begin{proof}
By definition of the domain we have $\delta_i\in D(\Omega)$ if and only if 
\begin{equation}\nonumber
\|\Omega_0 (\delta_i)\|^2=\sum_{k=1}^\infty |\omega_{ik}|^2<\infty.
\end{equation}
Thus, by equation \eqref{eq:densely def condition 01},  $\Omega$ 
is densely defined, because $D_0$ is a dense subset of $\ell^2(\mathbb{N})$. 
Let us prove that $\Gamma\subset \Omega_0^*$.
Consider $v=\sum_{j=1}^\infty v_j\delta_j\in D(\Gamma)$, then
\begin{equation}\nonumber
\langle \Omega_0(\delta_i), v\rangle
=\sum_{k=1}^\infty\sum_{j=1}^\infty\omega_{ik}\overline{v_j}\langle \delta_k, \delta_j\rangle
=\sum_{j=1}^\infty \omega_{ij}\overline{v_j}= \sum_{k=1}^\infty\sum_{j=1}^\infty \overline{\overline{\omega_{kj}} v_j}\langle \delta_i, \delta_k\rangle = \langle \delta_i, \Gamma(v)\rangle, \text{ for } i\in\mathbb{N}.
\end{equation}
Thus $\Omega^*_0(v)=\Gamma(v)$, for all $v\in D(\Gamma)$, i.e., $\Gamma\subset \Omega_0^*$.
Now, for $\Omega_0^*=\Gamma$ we must prove that $D(\Omega_0^*)\subset D(\Gamma)$.
Let $v=\sum_{j=1}^\infty v_j\delta_j\in D(\Omega_0^*)$ and recall that $\delta_k\in D(\Omega)$, then
\begin{equation}\nonumber
\langle \delta_k, \Omega_0^*(v)\rangle = \langle \Omega_0(\delta_k),v\rangle 
=\sum_{k=1}^\infty \overline{v_i} \omega_{ki}.
\end{equation}
This implies
\begin{equation}\nonumber
\sum_{k=1}^\infty \biggl|\sum_{i=1}^\infty v_i\overline{\omega_{ki}} \biggr|^2
=\sum_{k=1}^\infty \biggl|\sum_{i=1}^\infty \overline{v_i} \omega_{ki}\biggr|^2
=\sum_{k=1}^\infty |\langle \delta_k, \Omega_0^*(v)\rangle|^2
=\|\Omega_0^* (v)\|^2<\infty,
\end{equation}
that is $v\in D(\Gamma)$. 
It follows that $\Omega^*\subset \Omega_0^*=\Gamma$.
But $\Omega$ is densely defined, thus the adjoint $\Omega_0^*=\Gamma$ is a closed operator \cite[Proposition 1.6]{Schmudgen}. The same arguments are valid for $\Gamma$, i.e., $\delta_k\in D(\Gamma)$ if and only if 
\begin{equation}\nonumber
\|\Gamma_0(\delta_k)\|^2=\sum_{i=1}^\infty |\overline{\omega_{ik}}|^2
=\sum_{i=1}^\infty |\omega_{ik}|^2<\infty.
\end{equation}
 Which implies that $\Gamma$ is densely defined.
Also $\langle \Gamma_0(\delta_j), v\rangle=\langle \delta_j, \Omega(v)\rangle$, for all $j\in\mathbb{N}$.
Thus $\Gamma^*_0(v)=\Omega(v)$, for all $v\in D(A)$, i.e., $\Omega\subset \Gamma_0^*$.
To show that $\Gamma_0^*=\Omega$ we can prove that $D(\Gamma_0^*)\subset D(\Omega)$ using that for  $v=\sum_{j=1}^\infty v_j\delta_j$ 
\begin{equation}\nonumber
\langle \delta_k,\Gamma_0^*(v)\rangle = \langle \Gamma_0(\delta_k),v\rangle 
=\sum_{i=1}^\infty\overline{\omega_{ik}}\,\overline{v_i}
\end{equation}
and
\begin{equation}\nonumber
\sum_{k=1}^\infty\biggl|\sum_{i=1}^{\infty}\omega_{ik} v_i \biggr|^2
=\sum_{k=1}^\infty \biggl|\sum_{i=1}^\infty\overline{\omega_{ik}}\,\overline{v_i} \biggr|^2
=\sum_{k=1}^\infty|\langle \delta_k, \Gamma_0^*(v)\rangle|^2
=\|\Gamma_0^* (v)\|^2<\infty .
\end{equation}
Again, it follows that $\Gamma^*\subset \Gamma_0^*=\Omega$.
And $\Gamma$ is densely defined, thus the adjoint $\Gamma_0^*=\Omega$ is a closed operator.
\end{proof}
\bigskip
\begin{corollary}
Let $G=(V,E)$ be an infinite digraph.
If $\deg^+(i)<\infty$, for all $i\in\mathbb{N}$, then the adjacency operator $A$ is densely defined, 
$A^*\subset A_0^*=B$ and $B$ is closed.
If $\deg^-(i)<\infty$, for all $i\in\mathbb{N}$, then $B$ is densely defined, 
$B^*\subset B_0^*=A$ and $A$ is closed.
\end{corollary}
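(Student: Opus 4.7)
The plan is to view this corollary as a direct specialization of Proposition \ref{prop:densely def condition} applied to the unweighted digraph $G=(V,E)$ regarded as a weighted digraph with weights $\omega_{ik}:=a_{ik}\in\{0,1\}$. Under this identification, the operators $\Omega$ and $\Gamma$ from the proposition coincide with $A$ and $B$ respectively, as explicitly noted just before the statement of Proposition \ref{prop:densely def condition}.

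The key observation is that, because $a_{ik}\in\{0,1\}$, we have $|a_{ik}|^2=a_{ik}$. Consequently, the summability hypothesis \eqref{eq:densely def condition 01} of Proposition \ref{prop:densely def condition} translates into
\begin{equation*}
\sum_{k=1}^{\infty}|a_{ik}|^{2}=\sum_{k=1}^{\infty}a_{ik}=\deg^{+}(i),
\end{equation*}
so that \eqref{eq:densely def condition 01} holds for all $i\in\mathbb{N}$ if and only if $\deg^{+}(i)<\infty$ for all $i\in\mathbb{N}$. Symmetrically, \eqref{eq:densely def condition 02} reduces to $\deg^{-}(k)<\infty$ for all $k\in\mathbb{N}$.

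With these translations in hand, the first assertion is immediate: under $\deg^{+}(i)<\infty$ for all $i$, Proposition \ref{prop:densely def condition} (applied with $\omega_{ik}=a_{ik}$) yields that $A$ is densely defined, $A^{*}\subset A_{0}^{*}=B$, and $B$ is closed. The second assertion is obtained in exactly the same way from the second half of Proposition \ref{prop:densely def condition}, using the finiteness of $\deg^{-}$. No further argument is needed; there is no real obstacle here, the only point requiring care is checking that the passage from the weighted hypotheses \eqref{eq:densely def condition 01}--\eqref{eq:densely def condition 02} to the degree conditions uses nothing more than $a_{ik}^{2}=a_{ik}$, and that the identification $A=\Omega|_{\omega=a}$, $B=\Gamma|_{\omega=a}$ preserves domains verbatim.
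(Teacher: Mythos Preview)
Your proof is correct and follows essentially the same approach as the paper: both verify that finiteness of $\deg^{+}(i)$ (respectively $\deg^{-}(i)$) implies the summability hypothesis \eqref{eq:densely def condition 01} (respectively \eqref{eq:densely def condition 02}) with $\omega_{ik}=a_{ik}$, and then invoke Proposition~\ref{prop:densely def condition} directly. Your write-up is slightly more explicit in computing $\sum_k |a_{ik}|^2=\deg^+(i)$ via $a_{ik}^2=a_{ik}$, but the substance is identical.
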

\begin{proof}
Let $i\in \mathbb{N}$. 
If $\deg^+(i)<\infty$ then $a_{ik}\neq 0$, only for finite values of $k$. Hence \eqref{eq:densely def condition 01} with $\omega_{ik}=a_{ik}$, is satisfied and we can apply 
the Proposition \ref{prop:densely def condition} to obtain that $A$ is densely defined, 
$A^*\subset A_0^*=B$ and $B$ is closed.
The proof of other claim is similar, using equation \eqref{eq:densely def condition 02}.
\end{proof}

For the continuity of the adjacency operator we need stronger assumptions.
\bigskip

\begin{proposition}\label{prop:bounded condition 0}
Let $G=(V,E,\omega)$ be an infinite weighted digraph.
Suppose that one of the following conditions is satisfied:
\begin{enumerate}[label=(\roman*)]
\item \label{item:condition continuity C 0}
$\displaystyle M:=\left(\sum_{k=1}^{\infty}\sum_{i=1}^{\infty}|\omega_{ik}|^2\right)^{1/2}<\infty.$ \vspace{0.2cm}
		
\item There exist constants $\alpha_k, \beta_i>0$, $i,k\in \mathbb{N}$ and $M_1, M_2>0$ such that
\begin{equation}\label{eq:schur test 0}
\begin{array}{l}
\displaystyle\sum_{k=1}^{\infty}|\omega_{ik}|\alpha_k\leq M_1\beta_i, \quad\text{ for all } i\in \mathbb{N}, \\[1.1ex]
\displaystyle\sum_{i=1}^{\infty}|\omega_{ik}|\beta_i\leq M_2\alpha_k,
\quad\text{ for all } k\in \mathbb{N}.
\end{array}
\end{equation}
\end{enumerate}
Then the weighted adjacency operator $\Omega$ is bounded. 
In the first case we have $\|\Omega\|\leq M$ and in the second case we have $\|\Omega\|\leq (M_1M_2)^{1/2}$.
\end{proposition}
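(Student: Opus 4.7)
The plan is to prove each of the two bounds by reducing to classical estimates: condition \ref{item:condition continuity C 0} amounts to a Hilbert--Schmidt-type estimate and condition (ii) is precisely the hypothesis of the Schur test. In both cases I first work on the dense subspace $D_0$ of finitely supported sequences (where all manipulations make sense termwise), establish an inequality $\|\Omega_0(v)\|\le C\|v\|$, and then invoke the fact that $\Omega$ is closed (Proposition \ref{prop:densely def condition}) together with the density of $D_0$ to conclude that $D(\Omega)=\ell^2(\mathbb{N})$ and $\|\Omega\|\le C$. Note that hypothesis \ref{item:condition continuity C 0} trivially implies \eqref{eq:densely def condition 01}, and hypothesis (ii) implies it as well (taking $v=\delta_i$ in the argument below), so Proposition \ref{prop:densely def condition} applies in either case.

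For the first case, I would fix $v=\sum_{i=1}^{\infty}v_i\delta_i\in D_0$, apply the Cauchy--Schwarz inequality coordinatewise,
\begin{equation*}
\Bigl|\sum_{i=1}^{\infty} v_i\omega_{ik}\Bigr|^2
\le \Bigl(\sum_{i=1}^{\infty} |v_i|^2\Bigr)\Bigl(\sum_{i=1}^{\infty}|\omega_{ik}|^2\Bigr)
= \|v\|^2 \sum_{i=1}^{\infty}|\omega_{ik}|^2,
\end{equation*}
and then sum over $k$, obtaining $\|\Omega_0(v)\|^2\le M^2\|v\|^2$. A density argument then gives $\|\Omega\|\le M$.

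For the second case, the idea is the standard Schur-test trick of inserting the weights $\beta_i^{1/2}$ and $\beta_i^{-1/2}$ inside the sum and applying Cauchy--Schwarz:
\begin{equation*}
\Bigl|\sum_{i=1}^{\infty}\omega_{ik} v_i\Bigr|^2
\le \Bigl(\sum_{i=1}^{\infty}|\omega_{ik}|\beta_i\Bigr)\Bigl(\sum_{i=1}^{\infty}|\omega_{ik}|\beta_i^{-1}|v_i|^2\Bigr)
\le M_2\,\alpha_k \sum_{i=1}^{\infty}|\omega_{ik}|\beta_i^{-1}|v_i|^2,
\end{equation*}
where in the last step I use the second inequality in \eqref{eq:schur test 0}. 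Summing over $k$, interchanging the order of summation (justified because $v\in D_0$ so only finitely many terms are nonzero in $i$), and applying the first inequality in \eqref{eq:schur test 0} yields
\begin{equation*}
\|\Omega_0(v)\|^2 \le M_2\sum_{i=1}^{\infty}\beta_i^{-1}|v_i|^2\sum_{k=1}^{\infty}|\omega_{ik}|\alpha_k
\le M_1 M_2\,\|v\|^2.
\end{equation*}
Extending by density once more gives $\|\Omega\|\le (M_1M_2)^{1/2}$.

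The computations are routine; the only subtle point is the passage from the bound on $\Omega_0$ to a bound on $\Omega$ with full domain $\ell^2(\mathbb{N})$. This is where I would appeal to the previous proposition: since $\Omega$ is closed and coincides with $\Omega_0$ on $D_0$, the continuous extension of $\Omega_0|_{D_0}$ to all of $\ell^2(\mathbb{N})$ must be contained in $\Omega$, forcing $D(\Omega)=\ell^2(\mathbb{N})$ and producing the stated norm bounds.
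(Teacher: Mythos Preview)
Your proof is correct and follows the same approach as the paper: Cauchy--Schwarz (H\"older in $\ell^2$) for case (i) and the standard Schur-test splitting $|\omega_{ik}|=\sqrt{|\omega_{ik}|\beta_i}\cdot\sqrt{|\omega_{ik}|/\beta_i}$ for case (ii), with identical computations. The paper works directly with $v\in D(\Omega)$ rather than restricting to $D_0$ and extending, so your density-plus-closedness argument is in fact more careful than what the paper writes down. One small point: to invoke closedness of $\Omega$ from Proposition~\ref{prop:densely def condition} you need \eqref{eq:densely def condition 02}, not just \eqref{eq:densely def condition 01}; in case (ii) this also follows from the Schur hypotheses via $|\omega_{ik}|\le M_1\beta_i/\alpha_k$ and then $\sum_i|\omega_{ik}|^2\le (M_1/\alpha_k)\sum_i|\omega_{ik}|\beta_i\le M_1M_2$, but you should say so explicitly.
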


\begin{proof}
The following argument is very similar to \cite[Proposition 3]{vidal-IJPAM}, but with modifications corresponding to our framework of infinite weighted digraphs.
First, note that the Condition \ref{item:condition continuity C 0} implies the Equations
 \eqref{eq:densely def condition 01} and \eqref{eq:densely def condition 02} are valid. 
Thus, by Proposition \ref{prop:densely def condition}, the operator $\Omega$ is densely defined and closed. 
Let $v=\sum_{i=1}^\infty v_i\delta_i\in D(\Omega)$, by Hölder inequality in $\ell^2$ we have
\begin{equation}\nonumber
\biggl|\sum_{i=1}^\infty\omega_{ik}v_i\biggr| \leq \sum_{i=1}^\infty| \omega_{ik}v_i| \leq 
\left( \sum_{i=1}^\infty|\omega_{ik}|^2\right)^{1/2}\left(\sum_{i=1}^\infty|v_{i}|^2\right)^{1/2}.
\end{equation}
Then,
\begin{equation}\nonumber
\|\Omega(v)\|^2 =\sum_{k=1}^\infty\biggl|\sum_{i=1}^\infty\omega_{ik}v_i\biggr|^2
\leq \sum_{k=1}^\infty \left( \sum_{i=1}^\infty|\omega_{ik}|^2\right)\left(\sum_{i=1}^\infty|v_{i}|^2\right)
=M^2\|v\|^2.
\end{equation}
Hence $\Omega$ is bounded and $\|\Omega\|\leq M$. Now let us assume that \eqref{eq:schur test 0} is satisfied. By a similar argument used to prove the Schur Test \cite[Section 45]{Halmos}, we have that 
\begin{equation*}
\begin{array}{rl}
\|\Omega(v)\|^2 
&\displaystyle=\sum_{k=1}^{\infty} \biggl|\sum_{i=1}^{\infty} v_i \omega_{ik}\biggr|^2 \\
&\displaystyle\leq \sum_{k=1}^{\infty} \biggl|\sum_{i=1}^{\infty} |v_i| |\omega_{ik}|\biggr|^2 \\
&\displaystyle=\sum_{k=1}^{\infty} \biggl|\sum_{i=1}^{\infty}  \left(\sqrt{|\omega_{ik}|}\sqrt {\beta_i}\right) \left(\frac{\sqrt{|\omega_{ik}|}|v_i|}{\sqrt {\beta_i}}\right)\biggr|^2 \\
&\displaystyle\leq \sum_{k=1}^{\infty} \left(\sum_{i=1}^{\infty} |\omega_{ik}| \beta_i\right) \left(\sum_{i=1}^{\infty} \frac{|\omega_{ik}| |v_i|^2}{\beta_i} \right) \\
& \displaystyle \leq\sum_{k=1}^{\infty} M_2\alpha_k \left(\sum_{i=1}^{\infty} \frac{|\omega_{ik}||v_i|^2}{\beta_i}\right) \\
&\displaystyle =M_2 \sum_{i=1}^{\infty}  \frac{|v_i|^2}{\beta_i}\left(\sum_{k=1}^{\infty}|\omega_{ik}|\alpha_k\right)\\
&\displaystyle \leq M_1M_2 \sum_{i=1}^{\infty}  |v_i|^2
=M_1M_2 \|v\|^2.
\end{array}
\end{equation*}
That is $\Omega$ a bounded linear operator, with $\|\Omega\|\leq (M_1 M_2)^{1/2}$. 
 \end{proof}

\bigskip
\begin{corollary}
Let $G=(V,E)$ be a infinite locally-finite digraph. Then the adjacency operator $A$ is bounded
if and only if the Condition \ref{item:condition continuity C 0} of Proposition 
\ref{prop:bounded condition 0} is satisfied.
In this case $\|A\|\leq M$.
\end{corollary}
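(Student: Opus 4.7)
The plan is to obtain this corollary as a direct specialization of Proposition~\ref{prop:bounded condition 0} by substituting the binary weights $\omega_{ik}=a_{ik}$ of the adjacency matrix into the general weighted statement. Local finiteness of $G$ ensures that each row of the adjacency matrix is square-summable, since $\sum_{k}a_{ik}^{2}=\deg^{+}(i)<\infty$, so the framework of dense definition and closability developed in Proposition~\ref{prop:densely def condition} applies verbatim to $A$ and to its closed adjoint $B$.

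For the sufficient direction I would essentially re-read the proof of Proposition~\ref{prop:bounded condition 0}(i) with $a_{ik}$ in place of $\omega_{ik}$. Under the hypothesis $M:=\bigl(\sum_{k,i}a_{ik}^{2}\bigr)^{1/2}<\infty$, the same Cauchy--Schwarz computation in $\ell^{2}$ yields $\|A(v)\|\le M\|v\|$ for every $v\in D(A)$, so $A$ extends by density to a bounded operator with $\|A\|\le M$. No new ingredient is needed beyond noting that that argument depends only on the $\ell^{2}$ structure of the matrix entries, which is unaffected by restricting to the $0/1$-valued case.

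For the necessary direction I would start from the basis identity $\|A(\delta_{i})\|^{2}=\sum_{k}a_{ik}^{2}=\deg^{+}(i)$, which under boundedness of $A$ immediately produces the uniform row estimate $\sup_{i}\deg^{+}(i)\le\|A\|^{2}$; applying the same identity to the closed adjoint $A^{*}=B$ (available via the preceding corollary together with local finiteness) gives the dual column estimate $\sup_{k}\deg^{-}(k)\le\|A\|^{2}$. I expect the principal obstacle to lie precisely in passing from these pointwise one-sided degree bounds to the global double-sum finiteness demanded by condition~\ref{item:condition continuity C 0}; my plan to close that gap is to work directly with the standard-basis matrix of $A$, exploit local finiteness to justify Fubini-type interchanges of summation order, and then combine the row and column estimates in order to identify $\sum_{k,i}a_{ik}^{2}$ with a quantity already controlled by $\|A\|$.
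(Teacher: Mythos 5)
Your sufficient direction coincides with the paper's: it is just Proposition \ref{prop:bounded condition 0}\ref{item:condition continuity C 0} with $\omega_{ik}=a_{ik}$, and that part is unproblematic.

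The necessary direction contains a genuine gap, which you have located correctly but whose proposed repair cannot succeed. From boundedness you get $\|A(\delta_i)\|^{2}=\sum_{k}a_{ik}^{2}=\deg^{+}(i)\le\|A\|^{2}$ for every $i$, a \emph{uniform} bound on the rows; Condition \ref{item:condition continuity C 0}, however, demands $\sum_{i=1}^{\infty}\sum_{k=1}^{\infty}a_{ik}^{2}=\sum_{i=1}^{\infty}\deg^{+}(i)<\infty$, i.e.\ \emph{summability} of the rows, which for a $0/1$ matrix says the digraph has only finitely many edges in total. No Fubini-type interchange and no combination of the row estimate $\sup_{i}\deg^{+}(i)\le\|A\|^{2}$ with the column estimate $\sup_{k}\deg^{-}(k)\le\|A\|^{2}$ can upgrade two sup-bounds to a finite double sum: the infinite directed path ($a_{i,i+1}=1$ and all other entries $0$) is locally finite, its adjacency operator is a shift of norm $1$, and yet $\sum_{i,k}a_{ik}^{2}=\infty$. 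For comparison, the paper's own converse is the one-line chain $M=\sum_{i}\sum_{j}|a_{ij}|^{2}=\sum_{j\in D^{1}(i)}\|A(\delta_j)\|^{2}\le\sum_{j\in D^{1}(i)}K^{2}<\infty$, in which the outer sum over all $i\in\mathbb{N}$ is silently collapsed to a sum over the finite set $D^{1}(i)$ for a single fixed $i$; your hesitation at precisely this step is warranted, and the obstacle you flagged is intrinsic to the statement rather than an artifact of your method.
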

\begin{proof}
One implication is the Proposition \ref{prop:bounded condition 0}.
Suppose that $A$ is bounded, then there exists $K>0$ such that $\|A(v)\|\leq K\|v\|$ for all $v\in D(A)$.
Since $G$ is locally finite then  for fixed $k\in \mathbb{N}$ we have $a_{kj}\neq 0$ only for a finite number of $j$'s, specifically this is occurs if $j\in D^1(k)$.
Hence
\begin{equation}\nonumber
M=\sum_{i=1}^\infty\sum_{j=1}^{\infty} |a_{ij}|^2
=\sum_{j\in D^1(i)}\sum_{i=1}^{\infty} a_{ij}
=\sum_{j\in D^1(i)}\|A(\delta_j)\|^2
\leq \sum_{j\in D^1(i)} K^2 <\infty.
\end{equation}

\end{proof}

\section{The weighted digraph of a Hilbert Evolution Algebra}\label{digraph_of_a_HEA}

\subsection{Definitions, properties and examples} 


\begin{definition} Let $\A$ be a Hilbert evolution algebra with a natural orthonormal basis $\mathcal{B}=\{e_i\}_{i\in\mathbb{N}}$  and  structural constants $\{c_{ik}\}_{i,k\in\mathbb{N}}$. The weighted digraph $G(\A,\mathcal{B}):=(V,E,\omega)$ with  $V=\mathbb{N}$, $E:=\{(i,k)\in\mathbb{N} \times \mathbb{N}:  c_{ik}\not= 0 \}$ and $\omega: E\rightarrow \mathbb{K}$ given by $\omega_{ik}:=c_{ik}$ is called the weighted digraph associated to the Hilbert evolution algebra $\A$ relative to the basis $\mathcal{B}$.
\end{definition}
Now, we want to define the corresponding adjacency and weighted adjacency operators, $A$ and $\Omega$.
Thus, we have to study the conditions on the coefficients to obtain densely defined operators.
Recall that the adjacency matrix of the graph $G(\A, \mathcal{B})$ has been used in \cite{Cabrera/Siles/Velasco,Elduque/Labra/2015,Elduque/Labra/2016} to study some properties for, mainly finite-dimensional, evolution algebras. Here we explore a bit more the same idea to obtain some properties for Hilbert evolution algebras. 
First, we translate the previous results of weighted digraphs to this context, to understand when the associated operators are densely defined or closed.
\bigskip
\begin{proposition}
\label{prop:adjacency and weighted densely defined and closed}
Let $\A$ be a Hilbert evolution algebra  with a  natural orthonormal basis $\mathcal{B}=\{e_i\}_{i\in\mathbb{N}}$ and $G(\A,\mathcal{B})$ the associated weighted digraph.
Then:
\begin{enumerate}[label=(\roman*)]
\item  \label{prop:adjacency and weighted densely defined and closed_i}The weighted adjacency operator $\Omega:\ell^2(\mathbb{N})\longrightarrow\ell^2(\mathbb{N})$ is densely defined.
\item \label{prop:adjacency and weighted densely defined and closed_ii} If the structure constants $\{\omega_{ik}\}_{i,k\in\mathbb{N}}$ 
satisfy the Equation \eqref{eq:densely def condition 02},
we have that $\Omega$ is closed.
\item  \label{prop:adjacency and weighted densely defined and closed_iii}If the adjacency constants $\{a_{ik}\}_{i,k\in\mathbb{N}}$ satisfy
\begin{equation}\nonumber
\sum_{k=1}^{\infty} |a_{ik}|^2 <\infty \,,\,\text{for all } i\in\mathbb{N}
\end{equation}
the adjacency operator $A:\ell^2(\mathbb{N})\longrightarrow\ell^2(\mathbb{N})$ is densely defined.
\item \label{prop:adjacency and weighted densely defined and closed_iv}If 
\begin{equation}\nonumber
\sum_{i=1}^{\infty} |a_{ik}|^2 <\infty \,,\,\text{for all } k\in\mathbb{N}
\end{equation}
we have that $A$ is closed.
\end{enumerate}
\end{proposition}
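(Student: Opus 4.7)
The plan is to recognize that the four parts of the proposition are essentially specializations of Proposition \ref{prop:densely def condition} to the weighted digraph $G(\A,\mathcal{B})$, with parts (iii) and (iv) being the case $\omega_{ik}=a_{ik}\in\{0,1\}$. Parts (ii), (iii), and (iv) are then immediate once the hypotheses are matched with \eqref{eq:densely def condition 01} or \eqref{eq:densely def condition 02}, so the only part requiring a genuine argument is (i), where no summability assumption is imposed and one has to extract the relevant bound from the Hilbert evolution algebra structure itself.

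For part (i), the first step is to invoke Definition \ref{def:evolalg}\ref{natural}: since $e_i\cdot e_i$ is an element of the Hilbert space $\A$ and is expressed as the series $\sum_{k=1}^{\infty} c_{ik}\, e_k$ with respect to the orthonormal basis $\{e_k\}_{k\in\mathbb{N}}$, the convergence of this series together with Parseval's identity yields
\begin{equation*}
\sum_{k=1}^{\infty}|c_{ik}|^2 \;=\; \|e_i\cdot e_i\|^2 \;<\;\infty,\qquad\text{for all } i\in\mathbb{N}.
\end{equation*}
Because $\omega_{ik}=c_{ik}$ by definition of $G(\A,\mathcal{B})$, this is exactly condition \eqref{eq:densely def condition 01}, and Proposition \ref{prop:densely def condition} then delivers that $\Omega$ is densely defined. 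I expect this to be the main (indeed essentially only) substantive step, because the other items involve no new ideas beyond reading off the hypotheses.

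For part (ii), the hypothesis is literally \eqref{eq:densely def condition 02} written for $\omega_{ik}$, so the ``analogously'' clause in Proposition \ref{prop:densely def condition} immediately gives that $\Gamma$ is densely defined and that $\Omega=\Gamma_0^{*}$, which is closed as the adjoint of a densely defined operator. For parts (iii) and (iv), one repeats the same arguments with $\omega_{ik}$ replaced by the adjacency constants $a_{ik}$: the stated hypotheses are exactly \eqref{eq:densely def condition 01} and \eqref{eq:densely def condition 02} applied to $(a_{ik})$, so Proposition \ref{prop:densely def condition} (or equivalently its corollary for adjacency operators) yields the conclusions verbatim. Overall, the conceptual content of the proof is concentrated in part (i), where the Hilbert-space hypothesis on $\A$ is what replaces the extra summability assumption that would otherwise be needed.
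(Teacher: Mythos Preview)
Your proposal is correct and follows essentially the same approach as the paper: for part (i) the paper observes that $\|e_i^2\|^2=\sum_{k}|\omega_{ik}|^2<\infty$ because $e_i^2\in\A$, which is exactly your Parseval argument, and then invokes Proposition~\ref{prop:densely def condition}; for (ii)--(iv) the paper likewise just applies Proposition~\ref{prop:densely def condition} directly (with $\omega_{ik}=a_{ik}$ for (iii) and (iv)).
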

\begin{proof}
To prove \ref{prop:adjacency and weighted densely defined and closed_i} note that $\A$ is a Hilbert evolution algebra, thus $e_i\in\A$, for all $i\in\mathbb N$.
This implies
\begin{equation}\nonumber
\|e_i^2\|^2= \sum_{k=1}^{\infty} |\omega_{ik}|^2 <\infty \,,\,\text{for all } i\in\mathbb{N}.
\end{equation}
Then, by Proposition \ref{prop:densely def condition}, $\Omega$ is densely defined.
To show \ref{prop:adjacency and weighted densely defined and closed_ii} apply directly Proposition \ref{prop:densely def condition} to obtain that $\Omega$ is closed. Finally, to see \ref{prop:adjacency and weighted densely defined and closed_iii} and \ref{prop:adjacency and weighted densely defined and closed_iv} use the Proposition \ref{prop:densely def condition} again, with $\omega_{ik}=a_{ik}$.
\end{proof}

Note that if the operator $\Omega:\ell^2(\mathbb{N})\longrightarrow\ell^2(\mathbb{N})$ is densely defined, it follows that it can be represented by an infinite matrix $\{\omega_{ij}\}_{i,j\in \mathbb{N}}$.
The same is valid for the adjacency operator $A:\ell^2(\mathbb{N})\longrightarrow\ell^2(\mathbb{N})$, if it is densely defined it can be represented by an infinite matrix $\{a_{ij}\}_{i,j\in \mathbb{N}}$.

\bigskip
\begin{proposition}\label{prop:operators-closed}
Let $\A$ be a Hilbert evolution algebra  with a  natural orthonormal basis $\mathcal{B}=\{e_i\}_{i\in\mathbb{N}}$ and $G(\A,\mathcal{B})$ the associated weighted digraph.
If the structure constants $\{\omega_{ik}\}_{i,k\in\mathbb{N}}$ satisfy one of the following conditions:
\begin{enumerate}[label=(\roman*)]
\item \label{item:condition continuity C}
$\displaystyle M:=\left(\sum_{k=1}^{\infty}\sum_{i=1}^{\infty}|\omega_{ik}|^2\right)^{1/2}<\infty.$ \vspace{0.2cm}
  	
\item There exist constants $\alpha_k, \beta_i>0$, $i,k\in \mathbb{N}$ and $M_1, M_2>0$ such that
\begin{equation}\nonumber \label{eq:schur test}
\begin{array}{l}
\displaystyle\sum_{k=1}^{\infty}|\omega_{ik}|\alpha_k\leq M_1\beta_i, \quad\text{ for all } i\in \mathbb{N}, \\[1.1ex]
\displaystyle\sum_{i=1}^{\infty}|\omega_{ik}|\beta_i\leq M_2\alpha_k,
\quad\text{ for all } k\in \mathbb{N},
\end{array}
\end{equation}
\end{enumerate}
then the weighted adjacency operator $\Omega$ is bounded. 
In the first case we have $\|\Omega\|\leq M$ and in the second case we have $\|\Omega\|\leq (M_1M_2)^{1/2}$.
\end{proposition}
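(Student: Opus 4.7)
The plan is to reduce this directly to Proposition \ref{prop:bounded condition 0}, since the weighted digraph $G(\A,\mathcal{B})=(V,E,\omega)$ associated to a Hilbert evolution algebra is, by construction, an infinite weighted digraph in the sense of Section \ref{preliminaries}, with $V=\mathbb{N}$ and weights $\omega_{ik}=c_{ik}$. Both hypotheses \ref{item:condition continuity C} and (ii) of the present statement transcribe verbatim into the corresponding hypotheses of Proposition \ref{prop:bounded condition 0}, so no additional work on the structural assumptions is needed.

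First I would verify that $\Omega$ is densely defined on $\ell^2(\mathbb{N})$; this has already been recorded in Proposition \ref{prop:adjacency and weighted densely defined and closed}\ref{prop:adjacency and weighted densely defined and closed_i} and uses only that $e_i\in\A$ implies
\begin{equation*}
\|e_i^2\|^2=\sum_{k=1}^{\infty}|\omega_{ik}|^2<\infty
\end{equation*}
for every $i\in\mathbb{N}$. With dense definition in hand, under either hypothesis (i) or (ii) I would invoke Proposition \ref{prop:bounded condition 0} applied to $G(\A,\mathcal{B})$ to conclude that $\Omega$ is bounded, with $\|\Omega\|\leq M$ in the first case and $\|\Omega\|\leq (M_1M_2)^{1/2}$ in the second.

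There is no substantive obstacle here, since the analytic content, namely the Hölder estimate for (i) and the Schur-type estimate for (ii), is entirely absorbed in the proof of Proposition \ref{prop:bounded condition 0}. The present proposition is essentially a statement that the bounded-operator criterion for infinite weighted digraphs specialises to the weighted digraph of a Hilbert evolution algebra without modification. A careful exposition might add a brief reminder that hypothesis \ref{item:condition continuity C} automatically implies both \eqref{eq:densely def condition 01} and \eqref{eq:densely def condition 02}, which via Proposition \ref{prop:adjacency and weighted densely defined and closed}\ref{prop:adjacency and weighted densely defined and closed_ii} also guarantees that $\Omega$ is closed; but this is a remark rather than a proof step.
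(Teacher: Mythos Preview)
Your proposal is correct and matches the paper's own proof, which consists of the single line ``Apply Proposition \ref{prop:bounded condition 0}.'' If anything, you are more thorough than the paper, since your remarks about dense definition and closedness are extra commentary not present in the original.
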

\begin{proof}
Apply Proposition \ref{prop:bounded condition 0}.
\end{proof}

All this allow us to think in concrete examples.
\bigskip
\begin{example}\label{ex:tree-HEA}
Fix $r\in\mathbb{N}$, with $r\geq 2$, denote $[r]:=\{1,\ldots,r\}$ and consider the countable set $$\Lambda:=\bigcup_{n=1}^{\infty}\left\{\{1\}\times[r]^{n-1}\right\},$$
where $[r]^n$ denotes the $n$-ary Cartesian power of the set $[r]$. Let $\A$ be the Hilbert evolution algebra with natural orthonormal basis $\mathcal{B}=\{e_i\}_{i\in \Lambda}$, and multiplication given by:
$$e_{1}^2=\sum_{j=1}^{r}c_{1,1j}e_{1j},$$
and, for any $k\geq 2$ and $1, i_2, \ldots, i_k\in [r]$ we let $$e_{1 i_2\ldots i_k}^2=\sum_{j=1}^{r}c_{1 i_2\ldots i_k,1 i_2\ldots i_k j}e_{1 i_2\ldots i_k j}.$$
In other words, we assume that the structural constants satisfy
$$c_{i,ij}\left\{
\begin{array}{cl}
    \neq 0 &\text{ if }i\in \{1\}\times[r]^{n-1},\text{ and }j\in [r]
    \text{ for some }n\in\mathbb{N},\\[.2cm]
    =0 &\text{ other case.} 
\end{array}\right.
$$
In this case, $G(\A,\mathcal{B})$ is a weighted infinite rooted $r$-ary tree; i.e., one vertex can be designated as the root and every edge is directed away from the root. Here the root is vertex $1$. See Figure \ref{fig:tree-HEA} for details. Moreover, since $G(\A,\mathcal{B})$ is locally finite its adjacency operator is densely defined and both its adjacency and the weighted adjacency operators are closed, see Proposition \ref{prop:adjacency and weighted densely defined and closed}.    
\end{example}

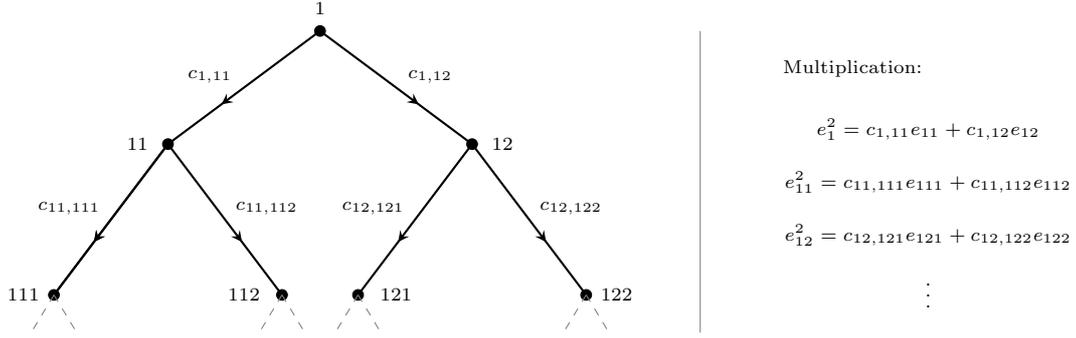
\begin{figure}
    \centering
\begin{tikzpicture}
\filldraw [black] (0,0) circle (2pt);
\node at (0,0.3) {\footnotesize $1$};
\node at (-1.45,-0.6) {\footnotesize $c_{1,11}$};
\node at (1.45,-0.6) {\footnotesize $c_{1,12}$};
\filldraw [black] (-2,-1.5) circle (2pt);
\node at (-2.4,-1.5) {\footnotesize $11$};
\node at (-3.3,-2.35) {\footnotesize $c_{11,111}$};
\node at (-0.7,-2.35) {\footnotesize $c_{11,112}$};
\filldraw [black] (2,-1.5) circle (2pt);
\node at (2.4,-1.5) {\footnotesize $12$};
\node at (3.3,-2.35) {\footnotesize $c_{12,122}$};
\node at (0.7,-2.35) {\footnotesize $c_{12,121}$};
\filldraw [black] (-3.5,-3.5) circle (2pt);
\node at (-3.9,-3.5) {\footnotesize $111$};
\filldraw [black] (-0.5,-3.5) circle (2pt);
\node at (-1,-3.5) {\footnotesize $112$};
\filldraw [black] (3.5,-3.5) circle (2pt);
\node at (1,-3.5) {\footnotesize $121$};
\filldraw [black] (0.5,-3.5) circle (2pt);
\node at (3.9,-3.5) {\footnotesize $122$};

\draw[thick,directed] (0,0) to (-2,-1.5);
\draw[thick,directed] (0,0) to (2,-1.5);
\draw[thick,directed] (-2,-1.5) to (-3.5,-3.5);
\draw[thick,directed] (-2,-1.5) to (-0.5,-3.5);
\draw[thick,directed] (2,-1.5) to (3.5,-3.5);
\draw[thick,directed] (2,-1.5) to (0.5,-3.5);
\draw[thick,directed] (-2,-1.5) to (-3.5,-3.5);
\draw[dashed,gray] (-3.5,-3.5) to (-3.8,-4);
\draw[dashed,gray] (-3.5,-3.5) to (-3.2,-4);
\draw[dashed,gray] (3.5,-3.5) to (3.8,-4);
\draw[dashed,gray] (3.5,-3.5) to (3.2,-4);
\draw[dashed,gray] (0.5,-3.5) to (0.8,-4);
\draw[dashed,gray] (0.5,-3.5) to (0.2,-4);
\draw[dashed,gray] (-0.5,-3.5) to (-0.8,-4);
\draw[dashed,gray] (-0.5,-3.5) to (-0.2,-4);

\draw[gray] (5,0) to (5,-4);

\node at (7,-0.5) {\footnotesize Multiplication:};

\node at (8,-1.3) {\footnotesize $e_1^2=c_{1,11}e_{11}+c_{1,12}e_{12}$};
\node at (8,-2) {\footnotesize $e_{11}^2=c_{11,111}e_{111}+c_{11,112}e_{112}$};
\node at (8,-2.7) {\footnotesize $e_{12}^2=c_{12,121}e_{121}+c_{12,122}e_{122}$};
\node at (8,-3.4) {\footnotesize $\vdots$};
\end{tikzpicture}

    \caption{The weighted digraph associated to the algebra $\A$ of Example \ref{ex:tree-HEA}, for $r=2$. Given the algebra multiplication (right side) the associated weighted digraph is a rooted $r$-ary tree (left side).}
    \label{fig:tree-HEA}
\end{figure}

\bigskip
\begin{example}\label{exa:nonLF}
Let $\A$ be the Hilbert evolution algebra with  natural orthonormal basis $\mathcal{B}=\{e_i\}_{i\in \mathbb{N}}$, and multiplication given by:
$$e_1^2=\displaystyle \sum_{j=2}^{\infty}c_{1j}e_j \quad  \text{ and } \quad e_i^2= e_{i+1}, \quad  \text{ for } i\geq 2,$$ 
where we assume that $c_{1j}\neq 0$ for any $j\neq 1$, and   $\sum_{j=1}^{\infty}c_{1j}=1$. Thus defined, the structural constants are transition probabilities for a discrete-time Markov chain with infinitely many states so this is an example of a Markov Hilbert evolution algebra. Moreover, note that  $G(\A,\mathcal{B})$ is a non-locally finite weighted digraph because $\deg^+(1)=\infty$. See Figure \ref{fig:nonLF} for details. 

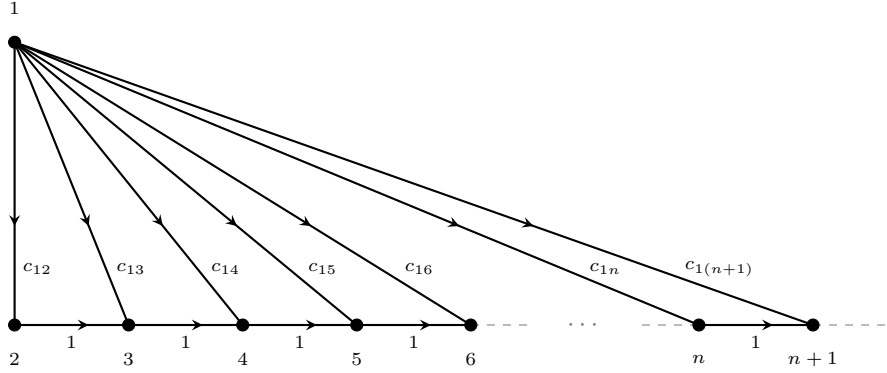
\begin{figure}[h!]
    \centering
    \begin{tikzpicture}[scale=1.5]
    \node at (5,0) {\color{gray}$\cdots$};

\draw[dashed,gray] (4,0) -- (4.5,0);
\draw[dashed,gray] (5.5,0) -- (6,0);
\draw[thick,directed] (0,0) -- (1,0);
\draw[thick,directed] (1,0) -- (2,0);
\draw[thick,directed] (2,0) -- (3,0);
\draw[thick,directed] (3,0) -- (4,0);
\draw[thick,directed] (6,0) -- (7,0);
\draw[dashed,gray] (7,0) -- (7.7,0);
\draw[thick,directed] (0,2.5) -- (0,0);
\draw[thick,directed] (0,2.5) -- (1,0);
\draw[thick,directed] (0,2.5) -- (2,0);
\draw[thick,directed] (0,2.5) -- (3,0);
\draw[thick,directed] (0,2.5) -- (4,0);
\draw[thick,directed] (0,2.5) -- (6,0);
\draw[thick,directed] (0,2.5) -- (7,0);
\filldraw [black] (0,0) circle (1.5pt);    
\node at (0,-0.3) {\footnotesize $2$};
\node at (0.5,-0.15) {\footnotesize $1$};
\node at (1.5,-0.15) {\footnotesize $1$};
\node at (2.5,-0.15) {\footnotesize $1$};
\node at (3.5,-0.15) {\footnotesize $1$};
\node at (6.5,-0.15) {\footnotesize $1$};
\node at (0.2,0.5) {\footnotesize $c_{12}$};
\node at (1.02,0.5) {\footnotesize $c_{13}$};     
\node at (1.85,0.5) {\footnotesize $c_{14}$};     
\node at (2.7,0.5) {\footnotesize $c_{15}$};     
\node at (3.55,0.5) {\footnotesize $c_{16}$};     
\node at (5.18,0.5) {\footnotesize $c_{1n}$};     
\node at (6.18,0.5) {\footnotesize $c_{1(n+1)}$};     
\filldraw [black] (1,0) circle (1.5pt);     \node at (1,-0.3) {\footnotesize $3$}; 
\filldraw [black] (2,0) circle (1.5pt);     \node at (2,-0.3) {\footnotesize $4$}; 
\filldraw [black] (3,0) circle (1.5pt);     \node at (3,-0.3) {\footnotesize $5$}; 
\filldraw [black] (4,0) circle (1.5pt);     \node at (4,-0.3) {\footnotesize $6$}; 
\filldraw [black] (6,0) circle (1.5pt);     \node at (6,-0.3) {\footnotesize $n$}; 
\filldraw [black] (7,0) circle (1.5pt);     \node at (7,-0.3) {\footnotesize $n+1$}; 
\filldraw [black] (0,2.5) circle (1.5pt);     \node at (0,2.8) {\footnotesize $1$}; 

    \end{tikzpicture}
    \caption{ The weighted digraph associated to the Hilbert evolution algebra
 from Example \ref{exa:nonLF}. Given the algebra multiplication the associated digraph is a non-locally finite graph with infinitely many vertices.}
    \label{fig:nonLF}
\end{figure}
\end{example}

The use of an associated digraph to study evolution algebras is widely acknowledged for finite-dimensional evolution algebras. For a review of relevant applications, we refer the reader to \cite{Cabrera/Siles/Velasco,Elduque/Labra/2015,Elduque/Labra/2016}. 

\subsection{The associated digraphs after a change of basis}
It is worth pointing out that the obtained graph $G(\A,\mathcal{B})$ depends on the chosen basis as the following examples show.

\bigskip
\begin{example}\label{exa:base1}
Let $\A$ be the Hilbert evolution algebra with  natural orthonormal basis $\mathcal{B}=\{e_i\}_{i\in \mathbb{N}}$, and multiplication given by:
$$
e_i^2=\left\{
\begin{array}{cl}
 e_{i+1} + e_{i+2},    &\text{ for }i\in \mathbb{N}\text{ odd},\\[.2cm]
 e_{i} + e_{i+1},    &\text{ for }i\in \mathbb{N}\text{ even},
\end{array}
\right.
$$ 
and $e_i \cdot e_j= 0, \text{ for }i\neq j.$ See the associated weighted digraph $G(\A,\mathcal{B})$ in Figure \ref{fig:base1}(a). Let $\mathcal{C}:=\{f_i\}_{i\in \mathbb{N}}$ be given by:
\begin{equation}
\label{eq:ei to fi}
f_i:=\left\{
\begin{array}{cl}
 e_{i} + e_{i+1},    &\text{ for }i\in \mathbb{N}\text{ odd},\\[.2cm]
 e_{i} - e_{i-1},    &\text{ for }i\in \mathbb{N}\text{ even},
\end{array}
\right.
\end{equation} 
and let $\mathcal{C}_{0}:=\{\tilde{f}_{i}\}_{i\in \mathbb{N}}$ with $\tilde{f}_i:=f_i/\|f_i\|$ for $i\in\mathbb{N}$.  We claim that $\mathcal{C}_0$ is another natural orthonormal basis of $\A$. Indeed, thus defined, we can write
\begin{equation}
\label{eq:fi to ei}
e_i =\left\{
\begin{array}{cl}
(1/2) f_{i} - (1/2)f_{i+1},    &\text{ for }i\in \mathbb{N}\text{ odd},\\[.2cm]
 (1/2) f_{i} + (1/2) f_{i-1},    &\text{ for }i\in \mathbb{N}\text{ even}.
\end{array}
\right.
\end{equation} 
\noindent
In order to verify that it is natural, observe that $f_{i}\cdot f_{j}=0$ if $|j-i|\geq 2$, and if $j=i+1$ then we have for $i\in \mathbb{N}$ even that $f_i \cdot f_j:= (e_{i} - e_{i-1})\cdot (e_{i+1} + e_{i+2}) = 0,$ while  for $i\in \mathbb{N}$ odd we obtain:
$$f_i \cdot f_j:= (e_{i} + e_{i+1})\cdot (e_{i+1} - e_{i}) = e_{i+1}^2 -e_i^2 = (e_{i+1}+e_{i+2})- (e_{i+1}+e_{i+2})=0.$$
In a similar way  we can verify that $\langle f_i , f_j \rangle =0$, for $i\not= j$. So $\mathcal{C}_0$ is a natural orthonormal set of $\A$.
But the equations \eqref{eq:ei to fi} and \eqref{eq:fi to ei} are bijections, thus the spanned sets by $\mathcal{B}$ and $\mathcal{C}_0$ are equal, implying that $\mathcal{C}_0$ is a natural orthonormal basis of $\A$.
Moreover, the multiplication of $\A$ using $\mathcal{C}_0$ is given by:
\begin{equation}\label{eq:foeq1}
\tilde{f}_i^2=\left\{
\begin{array}{cl}
 (1/\sqrt{2})\left\{\tilde{f}_i + \tilde{f}_{i+1} + \tilde{f}_{i+2} -  \tilde{f}_{i+3}\right\},    &\text{ for }i\in \mathbb{N}\text{ odd},\\[.2cm]
(1/\sqrt{2})\left\{\tilde{f}_{i-1} +  \tilde{f}_{i} +  \tilde{f}_{i+1} -  \tilde{f}_{i+2}\right\},    &\text{ for }i\in \mathbb{N}\text{ even},
\end{array}
\right.
\end{equation} 
and $\tilde{f}_i \cdot \tilde{f}_j= 0, \text{ for }i\neq j.$ Let us check \eqref{eq:foeq1}. Note that $\tilde{f}_{i}^2 = f_i^2 /2$. Let $i\in \mathbb{N}$ odd. Then
$$
\begin{array}{ccl}
\tilde{f}_{i}^2 &=&(1/2)e_i^2 + (1/2)e_{i+1}^2\\[.2cm]
 &=&(1/2)(e_{i+1} + e_{i+2}) + (1/2)(e_{i+1} + e_{i+2})\\[.2cm]
 &=&e_{i+1} + e_{i+2}\\[.2cm]
  &=&\left\{(1/2)f_{i+1} + (1/2)f_{i}\right\} +\left\{ (1/2)f_{i+2} - (1/2)f_{i+3}\right\}.\\[.2cm]
  &=& (1/\sqrt{2})\left\{\tilde{f}_i + \tilde{f}_{i+1} + \tilde{f}_{i+2} -  \tilde{f}_{i+3}\right\}.
\end{array}
$$
\smallskip
Analogously, if $i\in \mathbb{N}$ is even, we get
$$
\begin{array}{ccl}
\tilde{f}_{i}^2 &=&(1/2)e_i^2 + (1/2)e_{i-1}^2\\[.2cm]
 &=&e_{i} + e_{i+1}\\[.2cm]
  &=&\left\{(1/2)f_{i} + (1/2)f_{i-1}\right\} +\left\{ (1/2)f_{i+1} - (1/2)f_{i+2}\right\}\\[.2cm]
  &=& (1/\sqrt{2})\left\{\tilde{f}_{i-1} +\tilde{f}_i + \tilde{f}_{i+1} - \tilde{f}_{i+2} \right\}.
\end{array}
$$

\smallskip
The associated weighted digraph $G(\A,\mathcal{C}_{0})$ is given in Figure \ref{fig:base1}(b).

\end{example}

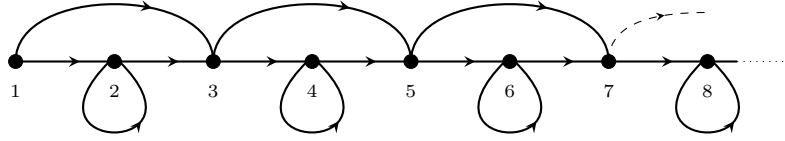
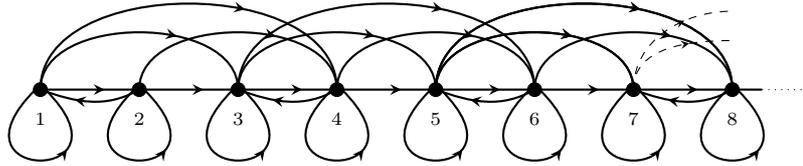
\begin{figure}[h!]
    \centering
    \subfigure[$G(\A,\mathcal{B})$]{
    \begin{tikzpicture}[scale=1.3]
\draw[thick] (7,0) -- (7.3,0);
\draw[thick,directed] (0,0) -- (1,0);
\draw [thick,directed] (0,0) to [out=90,in=90] (2,0);
\draw [thick, directed] (0.95,0) to [out=225,in=315,looseness=35] (1.05,0);
\draw[thick,directed] (1,0) -- (2,0);
\draw [thick,directed] (2,0) to [out=90,in=90] (4,0);
\draw[thick,directed] (2,0) -- (3,0);
\draw [thick, directed] (2.95,0) to [out=225,in=315,looseness=35] (3.05,0);
\draw[thick,directed] (3,0) -- (4,0);
\draw [thick,directed] (4,0) to [out=90,in=90] (6,0);
\draw[thick,directed] (4,0) -- (5,0);
\draw [thick, directed] (4.95,0) to [out=225,in=315,looseness=35] (5.05,0);
\draw[thick,directed] (5,0) -- (6,0);
\draw [dashed,directed] (6,0) to [out=90,in=180] (7,0.5);
\draw[thick,directed] (6,0) -- (7,0);
\draw [thick, directed] (6.95,0) to [out=225,in=315,looseness=35] (7.05,0);
\draw[dotted] (7.3,0) -- (7.8,0);
\filldraw [black] (0,0) circle (2pt);    \node at (0,-0.3) {\footnotesize $1$};     
\filldraw [black] (1,0) circle (2pt);     \node at (1,-0.3) {\footnotesize $2$}; 
\filldraw [black] (2,0) circle (2pt);     \node at (2,-0.3) {\footnotesize $3$}; 
\filldraw [black] (3,0) circle (2pt);     \node at (3,-0.3) {\footnotesize $4$}; 
\filldraw [black] (4,0) circle (2pt);     \node at (4,-0.3) {\footnotesize $5$}; 
\filldraw [black] (5,0) circle (2pt);     \node at (5,-0.3) {\footnotesize $6$}; 
\filldraw [black] (6,0) circle (2pt);     \node at (6,-0.3) {\footnotesize $7$}; 
\filldraw [black] (7,0) circle (2pt);     \node at (7,-0.3) {\footnotesize $8$}; 
    \end{tikzpicture}}

        \subfigure[$G(\A,\mathcal{C}_0)$.]{
    \begin{tikzpicture}[scale=1.3]
\draw[thick] (7,0) -- (7.3,0);
\draw[thick,directed] (0,0) -- (1,0);
\draw [thick, directed] (-0.05,0) to [out=225,in=315,looseness=35] (0.05,0);
\draw [thick,directed] (0,0) to [out=90,in=90] (2,0);
\draw [thick,directed] (0,0) to [out=90,in=90] (3,0);
\draw [thick, directed] (0.95,0) to [out=225,in=315,looseness=35] (1.05,0);
\draw[thick,directed] (1,0) -- (2,0);
\draw [thick,directed] (1,0) to [out=90,in=90] (3,0);
\draw [thick,directed] (1,0) to [out=210,in=340] (0,0);
\draw [thick,directed] (2,0) to [out=90,in=90] (4,0);
\draw [thick,directed] (2,0) to [out=90,in=90] (5,0);
\draw [thick, directed] (1.95,0) to [out=225,in=315,looseness=35] (2.05,0);
\draw[thick,directed] (2,0) -- (3,0);
\draw [thick, directed] (2.95,0) to [out=225,in=315,looseness=35] (3.05,0);
\draw[thick,directed] (3,0) -- (4,0);
\draw [thick,directed] (3,0) to [out=90,in=90] (5,0);
\draw [thick,directed] (3,0) to [out=210,in=340] (2,0);
\draw [thick,directed] (4,0) to [out=90,in=90] (6,0);
\draw [thick,directed] (4,0) to [out=90,in=90] (7,0);
\draw [thick, directed] (3.95,0) to [out=225,in=315,looseness=35] (4.05,0);
\draw[thick,directed] (4,0) -- (5,0);
\draw [thick,directed] (4,0) to [out=90,in=90] (6,0);
\draw [thick,directed] (4,0) to [out=90,in=90] (7,0);
\draw [thick, directed] (4.95,0) to [out=225,in=315,looseness=35] (5.05,0);
\draw[thick,directed] (5,0) -- (6,0);
\draw [thick,directed] (5,0) to [out=90,in=90] (7,0);
\draw [thick,directed] (5,0) to [out=210,in=340] (4,0);
\draw [dashed,directed] (6,0) to [out=90,in=180] (7,0.5);
\draw [dashed,directed] (6,0) to [out=90,in=180] (7,0.8);
\draw [thick, directed] (5.95,0) to [out=225,in=315,looseness=35] (6.05,0);
\draw[thick,directed] (6,0) -- (7,0);
\draw [thick, directed] (6.95,0) to [out=225,in=315,looseness=35] (7.05,0);
\draw [thick,directed] (7,0) to [out=210,in=340] (6,0);
\draw[dotted] (7.3,0) -- (7.8,0);
\filldraw [black] (0,0) circle (2pt);    \node at (0,-0.3) {\footnotesize $1$};     
\filldraw [black] (1,0) circle (2pt);     \node at (1,-0.3) {\footnotesize $2$}; 
\filldraw [black] (2,0) circle (2pt);     \node at (2,-0.3) {\footnotesize $3$}; 
\filldraw [black] (3,0) circle (2pt);     \node at (3,-0.3) {\footnotesize $4$}; 
\filldraw [black] (4,0) circle (2pt);     \node at (4,-0.3) {\footnotesize $5$}; 
\filldraw [black] (5,0) circle (2pt);     \node at (5,-0.3) {\footnotesize $6$}; 
\filldraw [black] (6,0) circle (2pt);     \node at (6,-0.3) {\footnotesize $7$}; 
\filldraw [black] (7,0) circle (2pt);     \node at (7,-0.3) {\footnotesize $8$}; 
    \end{tikzpicture}}
    \caption{
    Associated digraphs for the algebra $\A$ from Example \ref{exa:base1}.}
\label{fig:base1}
\end{figure}

\bigskip

\begin{example}\label{exa:base2}
Let $\A$ be the Hilbert evolution algebra with  natural  orthonormal basis $\mathcal{B}=\{e_i\}_{i\in \mathbb{N}}$, and multiplication given by:
$$
e_i^2=\left\{
\begin{array}{cl}
\sum_{\ell=2}^{\infty} \alpha_{\ell}e_{\ell} ,    &\text{ for }i=1,  \\[.2cm]
 e_{i} + e_{i-1},    &\text{ for }i\in \mathbb{N}\setminus\{1\}\text{ odd},\\[.2cm]
 e_{i} + e_{i+1},    &\text{ for }i\in \mathbb{N}\setminus\{1\}\text{ even},
\end{array}
\right.
$$ 
and $e_i \cdot e_j= 0, \text{ for }i\neq j.$ The associated graph $G(\A,\mathcal{B})$ is given in Figure \ref{fig:base2}. Let $\mathcal{C}:=\{f_i\}_{i\in \mathbb{N}}$ be given by: $f_1:=e_1$, and 
$$f_i:=\left\{
\begin{array}{cl}
e_{i} + e_{i-1},    &\text{ for }i\in \mathbb{N}\setminus\{1\}\text{ odd},\\[.2cm]
 e_{i} - e_{i+1},    &\text{ for }i\in \mathbb{N}\setminus\{1\}\text{ even},
\end{array}
\right.
$$ 
and note that it is another natural basis of $\A$. In this case, we can write $e_1=f_1$ and
$$e_i =\left\{
\begin{array}{cl}
(1/2) f_{i} - (1/2)f_{i-1},    &\text{ for }i\in \mathbb{N}\text{ odd},\\[.2cm]
 (1/2) f_{i} + (1/2) f_{i+1},    &\text{ for }i\in \mathbb{N}\text{ even},
\end{array}
\right.
$$ 
so $\mathcal{C}$ is a basis of $\A$. In addition, $f_1 \cdot f_j =0$ for $j\neq 1$ and $f_{i}\cdot f_{j}=0$ if $|j-i|\geq 2$. On the other hand, if $j=i+1$ note that for $i\in \mathbb{N}\setminus\{1\}$ odd we have $f_i \cdot f_j:= (e_{i} + e_{i-1})\cdot (e_{i+1} - e_{i+2}) = 0,$ while, for $i\in \mathbb{N}\setminus\{1\}$  even, we get
$$
f_i \cdot f_j:= (e_{i} - e_{i+1})\cdot (e_{i+1} + e_{i}) = e_{i}^2 - e_{i+1}^2 = (e_i + e_{i+1}) - (e_{i+1}+ e_i) =0.
$$ 
In a similar way to Example \ref{exa:base1}, we can prove that $\mathcal{C}$ is also an orthogonal basis.Therefore, $\mathcal{C}_{0}:=\{\tilde{f}_{i}\}_{i\in \mathbb{N}}$ with $\tilde{f}_i:=f_i/\|f_i\|$ for $i\in\mathbb{N}$ is an orthonormal basis of $\A$ with multiplication given by:
\begin{equation}\nonumber
    \tilde{f}_1^2 = (1/\sqrt{2})\left\{ \displaystyle\sum_{\ell=1}^{\infty} \left(\alpha_{2\ell} -\alpha_{2\ell +1}\right) \tilde{f}_{2\ell} +  \sum_{\ell=1}^{\infty} \left(\alpha_{2\ell} +\alpha_{2\ell +1}\right) \tilde{f}_{2\ell +1}\right\},
\end{equation}

\begin{equation}\nonumber
\label{eq:foeq2}
\tilde{f}_i^2=\left\{
\begin{array}{cl}  
 \sqrt{2} \tilde{f}_i ,    &\text{ for }i\in \mathbb{N}\setminus\{1\}\text{ odd},\\[.2cm]
\sqrt{2} \tilde{f}_{i+1},    &\text{ for }i\in \mathbb{N}\text{ even},
\end{array}
\right.
\end{equation} 
and $\tilde{f}_i \cdot \tilde{f}_j= 0, \text{ for }i\neq j.$ It is worth pointing out that the graph associated to $\A$ through the basis $\mathcal{C}_0$ depends of the choose of the sequence $\{\alpha_\ell\}_{\ell\in \mathbb{N}}$. See two of these chooses in Figure \ref{fig:base2}.

\end{example}

\begin{figure}[h!]
    \centering
    \subfigure[$G(\A,\mathcal{B})$.]{
\begin{tikzpicture}[scale=1.2]

\draw[directed] (0,0) -- (-1.8,1.1);
\draw [directed] (-1.8,1.05) to [out=215,in=145,looseness=25] (-1.8,1.15);
\draw[directed] (-1.8,1.1)--(-1.1,1.8);
\draw [directed] (-1.15,1.75) to [out=160,in=90,looseness=17] (-1.05,1.85);
\draw [directed] (-1.1,1.8) to [out=270,in=10] (-1.8,1.1);
\draw[directed] (0,0)--(-1.1,1.8);

\draw[directed] (0,0) -- (-0.8,2);
\draw[directed] (0.8,2) to [out=215,in=325] (-0.8,2);
\draw [directed] (-0.85,2) to [out=125,in=55,looseness=25] (-0.75,2);
\draw[directed] (-0.8,2)--(0.8,2);
\draw [directed] (0.75,2) to [out=125,in=55,looseness=25] (0.85,2);
\draw[directed] (0,0) --(0.8,2);

\draw[directed] (0,0) -- (1.8,1.1);
\draw[directed] (1.1,1.8)--(1.8,1.1);
\draw [directed] (1.1,1.85) to [out=90,in=20,looseness=25] (1.1,1.75);
\draw[directed] (1.8,1.1) to [out=190,in=270] (1.1,1.8);
\draw [directed] (1.8,1.15) to [out=70,in=360,looseness=25] (1.8,1.05);
\draw[directed] (0,0)--(1.1,1.8);

\draw[directed] (0,0) -- (2,0.8);
\draw[directed] (2,-0.8) to [out=125,in=215] (2,0.8);
\draw [directed] (2,0.85) to [out=35,in=325,looseness=25] (2,0.75);
\draw[directed] (2,0.8)--(2,-0.8);
\draw [directed] (2,-0.75) to [out=35,in=325,looseness=25] (2,-0.85);
\draw[directed] (0,0)--(2,-0.8);


\draw[directed] (0,0) -- (1.8,-1.1);
\draw[directed] (1.8,-1.1)--(1.1,-1.8);
\draw [directed] (1.85,-1.1) to [out=360,in=270,looseness=35] (1.8,-1.15);
\draw[directed] (1.1,-1.8) to [out=90,in=190] (1.8,-1.1);
\draw[directed] (0,0)--(1.1,-1.8);
\draw [directed] (1.15,-1.8) to [out=305,in=235,looseness=25] (1.05,-1.8);


\filldraw [black] (0,0) circle (2.1pt);
\draw (-1.4,1.55) node[above,font=\footnotesize] {$3$};
\filldraw [black] (2,0.8) circle (2.1pt);
\draw (1.4,1.55) node[above,font=\footnotesize] {$6$};
\filldraw [black] (2,-0.8) circle (2.1pt);
\draw (0.8,-1.55) node[below,font=\footnotesize] {$11$};
\filldraw [black] (-0.8,2) circle (2.1pt);
\draw (-0.5,2) node[above,font=\footnotesize] {$4$};
\filldraw [black] (0.8,2) circle (2.1pt);
\draw (0.5,2) node[above,font=\footnotesize] {$5$};

\filldraw [black] (1.1,-1.8) circle (2.1pt);
\filldraw [black] (1.1,1.8) circle (2.1pt);
\filldraw [black] (-1.1,1.8) circle (2.1pt);

\filldraw [black] (-1.8,1.1) circle (2.1pt);
\draw (-1.8,0.8) node[font=\footnotesize] {$2$};
\filldraw [black] (1.8,1.1) circle (2.1pt);
\draw (1.75,1.2) node[above,font=\footnotesize] {$7$};
\filldraw [black] (1.8,-1.1) circle (2.1pt);
\draw (2.15,0.6) node[below,font=\footnotesize] {$8$};
\draw (2.15,-0.6) node[above,font=\footnotesize] {$9$};
\draw (1.7,-1.3) node[below,font=\footnotesize] {$10$};

\draw (-0.2,-0.2) node[font=\footnotesize] {$1$};

\draw [thick,loosely dotted]    (-1.2,0.2) to[out=240,in=-145] (0.2,-1.2);

\end{tikzpicture}}

    \subfigure[$G(\A,\mathcal{C}_{0})$ obtained by assuming $\alpha_{2\ell}\neq \pm\, \alpha_{2\ell+1}$ for all $\ell\in \mathbb{N}$.]{
\begin{tikzpicture}[scale=1.2]

\draw[directed] (0,0) -- (-1.8,1.1);
\draw[directed] (-1.8,1.1)--(-1.1,1.8);
\draw [directed] (-1.15,1.75) to [out=160,in=90,looseness=17] (-1.05,1.85);
\draw[directed] (0,0)--(-1.1,1.8);

\draw[directed] (0,0) -- (-0.8,2);
\draw[directed] (-0.8,2)--(0.8,2);
\draw [directed] (0.75,2) to [out=125,in=55,looseness=25] (0.85,2);
\draw[directed] (0,0) --(0.8,2);

\draw[directed] (0,0) -- (1.8,1.1);
\draw[directed] (1.1,1.8)--(1.8,1.1);
\draw [directed] (1.8,1.15) to [out=70,in=360,looseness=25] (1.8,1.05);
\draw[directed] (0,0)--(1.1,1.8);

\draw[directed] (0,0) -- (2,0.8);
\draw[directed] (2,0.8)--(2,-0.8);
\draw [directed] (2,-0.75) to [out=35,in=325,looseness=25] (2,-0.85);
\draw[directed] (0,0)--(2,-0.8);


\draw[directed] (0,0) -- (1.8,-1.1);
\draw[directed] (1.8,-1.1)--(1.1,-1.8);
\draw[directed] (0,0)--(1.1,-1.8);
\draw [directed] (1.15,-1.8) to [out=305,in=235,looseness=25] (1.05,-1.8);


\filldraw [black] (0,0) circle (2.1pt);
\draw (-1.4,1.55) node[above,font=\footnotesize] {$3$};
\filldraw [black] (2,0.8) circle (2.1pt);
\draw (1.4,1.55) node[above,font=\footnotesize] {$6$};
\filldraw [black] (2,-0.8) circle (2.1pt);
\draw (0.8,-1.55) node[below,font=\footnotesize] {$11$};
\filldraw [black] (-0.8,2) circle (2.1pt);
\draw (-0.5,2) node[above,font=\footnotesize] {$4$};
\filldraw [black] (0.8,2) circle (2.1pt);
\draw (0.5,2) node[above,font=\footnotesize] {$5$};

\filldraw [black] (1.1,-1.8) circle (2.1pt);
\filldraw [black] (1.1,1.8) circle (2.1pt);
\filldraw [black] (-1.1,1.8) circle (2.1pt);

\filldraw [black] (-1.8,1.1) circle (2.1pt);
\draw (-1.8,0.8) node[font=\footnotesize] {$2$};
\filldraw [black] (1.8,1.1) circle (2.1pt);
\draw (1.75,1.2) node[above,font=\footnotesize] {$7$};
\filldraw [black] (1.8,-1.1) circle (2.1pt);
\draw (2.15,0.6) node[below,font=\footnotesize] {$8$};
\draw (2.15,-0.6) node[above,font=\footnotesize] {$9$};
\draw (1.7,-1.3) node[below,font=\footnotesize] {$10$};

\draw (-0.2,-0.2) node[font=\footnotesize] {$1$};

\draw [thick,loosely dotted]    (-1.2,0.2) to[out=240,in=-145] (0.2,-1.2);

\end{tikzpicture}}\qquad\subfigure[$G(\A,\mathcal{C}_{0})$ obtained by assuming $\alpha_{2\ell}= \alpha_{2\ell+1}$ for all $\ell \in \mathbb{N}$.]{
\begin{tikzpicture}[scale=1.2]

\draw[directed] (-1.8,1.1)--(-1.1,1.8);
\draw [directed] (-1.15,1.75) to [out=160,in=90,looseness=17] (-1.05,1.85);
\draw[directed] (0,0)--(-1.1,1.8);

\draw[directed] (-0.8,2)--(0.8,2);
\draw [directed] (0.75,2) to [out=125,in=55,looseness=25] (0.85,2);
\draw[directed] (0,0) --(0.8,2);

\draw[directed] (0,0) -- (1.8,1.1);
\draw[directed] (1.1,1.8)--(1.8,1.1);
\draw [directed] (1.8,1.15) to [out=70,in=360,looseness=25] (1.8,1.05);

\draw[directed] (2,0.8)--(2,-0.8);
\draw [directed] (2,-0.75) to [out=35,in=325,looseness=25] (2,-0.85);
\draw[directed] (0,0)--(2,-0.8);


\draw[directed] (1.8,-1.1)--(1.1,-1.8);
\draw[directed] (0,0)--(1.1,-1.8);
\draw [directed] (1.15,-1.8) to [out=305,in=235,looseness=25] (1.05,-1.8);


\filldraw [black] (0,0) circle (2.1pt);
\draw (-1.4,1.55) node[above,font=\footnotesize] {$3$};
\filldraw [black] (2,0.8) circle (2.1pt);
\draw (1.4,1.55) node[above,font=\footnotesize] {$6$};
\filldraw [black] (2,-0.8) circle (2.1pt);
\draw (0.8,-1.55) node[below,font=\footnotesize] {$11$};
\filldraw [black] (-0.8,2) circle (2.1pt);
\draw (-0.5,2) node[above,font=\footnotesize] {$4$};
\filldraw [black] (0.8,2) circle (2.1pt);
\draw (0.5,2) node[above,font=\footnotesize] {$5$};

\filldraw [black] (1.1,-1.8) circle (2.1pt);
\filldraw [black] (1.1,1.8) circle (2.1pt);
\filldraw [black] (-1.1,1.8) circle (2.1pt);

\filldraw [black] (-1.8,1.1) circle (2.1pt);
\draw (-1.8,0.8) node[font=\footnotesize] {$2$};
\filldraw [black] (1.8,1.1) circle (2.1pt);
\draw (1.75,1.2) node[above,font=\footnotesize] {$7$};
\filldraw [black] (1.8,-1.1) circle (2.1pt);
\draw (2.15,0.6) node[below,font=\footnotesize] {$8$};
\draw (2.15,-0.6) node[above,font=\footnotesize] {$9$};
\draw (1.7,-1.3) node[below,font=\footnotesize] {$10$};

\draw (-0.2,-0.2) node[font=\footnotesize] {$1$};

\draw [thick,loosely dotted]    (-1.8,-0.2) to[out=270,in=180] (-0.2,-1.8);
\end{tikzpicture}}
    \caption{Associated digraphs of the algebra $\A$ of Example \ref{exa:base2}. }
    \label{fig:base2}
\end{figure}
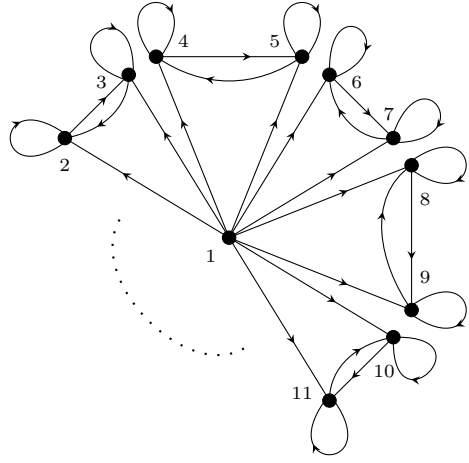
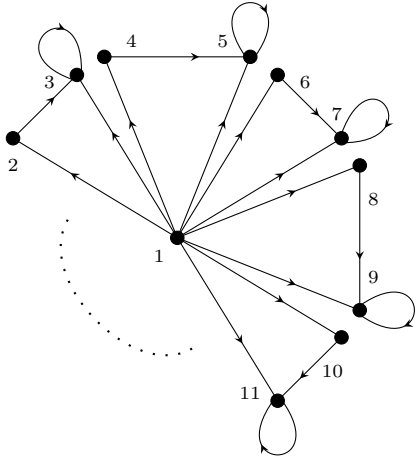
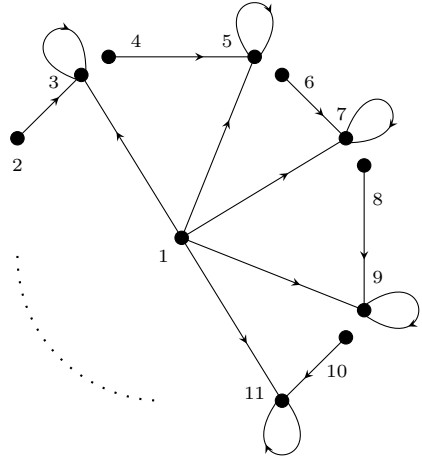


In both, Example \ref{exa:base1} and Example \ref{exa:base2}, the new basis $\mathcal{C}$ obtained from $\mathcal{B}$ is an orthogonal basis so the corresponding natural orthonormal basis $\mathcal{C}_0$ arises trivially. In general, a natural question when studying (Hilbert) evolution algebras is about the behaviour of the product of the algebra under the change of basis.

\section{Nilpotency}\label{nilpotency}
Inspired by the analysis of \cite{Elduque/Labra/2015}, we discuss how a condition for nilpotency can be affected whether we consider Hilbert evolution algebras.

Let $\A$ be a commutative algebra. The principal powers of $v \in \A$ are the products $v^2:= v\cdot v$, and in general $v^{n+1}:= v^n \cdot v$,  for $n\in \mathbb{N}$. We say that $v\in \A$ is nil if there exists a natural number $n$ such that $v^n= 0$. We say that $\A$ is nil if for any $v\in \A$, $v$ is nil. As usual, see \cite{camacho/gomez/omirov/turdibaev/2013,Elduque/Labra/2015},  we denote the following sequences of subspaces
$$
\begin{array}{lll}
\A^{<1>}:=\A,& &\A^{<n+1>}:=\A^{<n>}\A;\\[.2cm]
\A^{1}:=\A,& &\A^{n+1}: = \sum_{i=1}^{n}\A^i \A^{n+1-i},
\end{array}
$$
for any $n\in\mathbb{N}$. Moreover,  we say that $\A$ is (right) nilpotent if there exists $n\in\mathbb{N}$ such that ($\A^{<n>} = 0$) $\A^n =0$, and the minimal of such number is called the index of (right) nilpotency. That is, the index of right nilpotency is given by $n_r(\A):=\min\{n\in\mathbb{N}:\A^{<n>}=0\}$. For  a commutative algebra $\A$ we have that $\A^{2^n}\subset\A^{<n>}$ for $n\geq 1$ (see \cite[Prop 1, Chap 4]{Zhevlakov}) and we know that $\A^{<n>}\subset \A^n$, thus right nilpotency is equivalent to nilpotency. These results can be applied to Hilbert evolution algebras.
Besides this, we are going to introduce the following useful notation.
Let $I\subset \A$ be a subset of the algebra, define $I^{<1>}:=I$, and for any $n\in\mathbb{N}$, let $I^{<n+1>}:=I^{<n>}I$. That is,
$$
I^{<n+1>}:=\left\{\sum_{i}x_i y_i : x_i\in I^{<n>}, y_i \in I\right\}.
$$

We point out that for any finite-dimensional evolution algebra it is possible to define a norm such that the algebra becomes a Hilbert evolution algebra. Before analyzing (right) nilpotency for Hilbert evolution algebras let us remember what is known for evolution algebras.

\bigskip
\begin{theorem}\label{theo:duque}
\cite[Theorem 3.4]{Elduque/Labra/2015} Let $\A$ be a finite-dimensional evolution algebra with a natural basis $\mathcal{B}$. Then, the following conditions are equivalent:
\begin{enumerate}[label=(\roman*)]
    \item $\A$ is nil.
\item There are no oriented cycles in $G(\A, \mathcal{B})$.
\item The basis $\mathcal{B}$ can be reordered so that the matrix of structural constants is strictly upper triangular.
\item $\A$ is nilpotent.
\end{enumerate}
\end{theorem}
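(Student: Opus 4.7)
The plan is to establish the cycle of implications (iv) $\Rightarrow$ (i) $\Rightarrow$ (ii) $\Rightarrow$ (iii) $\Rightarrow$ (iv). The first implication is immediate: if $\A^N=\{0\}$ then $v^N\in\A^N=\{0\}$ for every $v\in\A$. The implication (ii) $\Rightarrow$ (iii) is a standard graph-theoretic fact: a finite acyclic digraph admits a topological ordering of its vertices, and reindexing $\mathcal{B}$ by such an ordering places the matrix of structural constants into strictly upper triangular form.

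For (iii) $\Rightarrow$ (iv), assuming $e_i^2\in\mathrm{span}(e_{i+1},\ldots,e_d)$ with $d:=\dim\A$, I would prove by induction that $\A^{<n>}\subseteq\mathrm{span}(e_n,\ldots,e_d)$. The inductive step exploits that, by the relations $e_ie_j=0$ for $i\neq j$, any product $(\sum_{k\geq n}\beta_k e_k)\cdot(\sum_l\gamma_l e_l)$ collapses to $\sum_{k\geq n}\beta_k\gamma_k e_k^2$, which lies in $\mathrm{span}(e_{n+1},\ldots,e_d)$. Setting $n=d+1$ yields $\A^{<d+1>}=\{0\}$, and combining this with the inclusion $\A^{2^n}\subseteq\A^{<n>}$ for commutative algebras (already recorded in the paper) gives $\A^{2^{d+1}}=\{0\}$, so $\A$ is nilpotent.

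The delicate direction is (i) $\Rightarrow$ (ii), which I would prove by contraposition: from any oriented cycle I construct a non-nil element. The crucial idea is to select a cycle $C\colon i_1\to i_2\to\cdots\to i_k\to i_1$ of \emph{minimum} length. By this minimality, the subgraph induced on $\{i_1,\ldots,i_k\}$ consists of exactly the cycle edges, since any additional edge---a chord $i_j\to i_l$ with $l\not\equiv j+1\pmod k$, or a self-loop---would produce a strictly shorter cycle (the case $k=1$ of a self-loop must be treated separately, noting that $c_{i_1 i_1}\neq 0$ immediately gives $e_{i_1}^{n}=c_{i_1 i_1}^{n-2}e_{i_1}^2\neq 0$). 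Setting $v:=e_{i_1}+\cdots+e_{i_k}$, I would observe that in $v^{n+1}=v^n\cdot v$ only the cycle-vertex coefficients of $v^n$ contribute, because any basis vector $e_m$ with $m\notin\{i_1,\ldots,i_k\}$ satisfies $e_m\cdot v=0$. Letting $\mu_r^{(n)}$ denote the coefficient of $e_{i_r}$ in $v^n$, this yields the linear recursion $\mu^{(n+1)}=M^{T}\mu^{(n)}$ with $M_{jl}:=c_{i_j,i_l}$. Because no chords are present, $M$ is a weighted cyclic permutation matrix, so an explicit unwinding of the recursion gives $\mu_r^{(n)}=\prod_{t=0}^{n-2}c_{i_{r-1-t},i_{r-t}}$ (indices mod $k$), a product of $n-1$ nonzero structural constants. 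Hence $v^n\neq 0$ for every $n$, so $v$ is not nil and $\A$ is not nil.

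The step I expect to be the main obstacle is precisely this last construction: without the minimality reduction, chords in the induced subgraph would introduce extra entries in $M$ that could in principle cancel the cyclic contribution and make the recursion collapse. Enforcing that $C$ be a shortest cycle is what forces the chord-free structure and converts the combinatorial hypothesis (existence of a cycle) into a clean linear-algebraic certificate of non-nilness.
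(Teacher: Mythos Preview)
The paper does not actually supply a proof of this theorem: it is quoted verbatim as \cite[Theorem~3.4]{Elduque/Labra/2015} and used as a known finite-dimensional benchmark. Your proof is correct, and the argument you give for the key implication (i)~$\Rightarrow$~(ii) --- choosing a cycle of \emph{minimum} length so that the induced subgraph is chord-free, setting $v=\sum_r e_{i_r}$, and tracking the cycle-vertex coefficients via a cyclic-permutation recursion --- is exactly the argument the paper itself invokes from \cite{Elduque/Labra/2015} when proving the infinite-dimensional analogues (see the proofs of Theorems~\ref{theo:duque-inf} and~\ref{theo:duque-inf2}, where the authors write ``assume that an oriented cycle of minimum length is the path $\{1,2,\ldots,k,1\}$'' and take $v=\sum_{i=1}^k e_i$). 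Your (ii)~$\Rightarrow$~(iii) via topological ordering is the standard finite-graph version of the sink-extraction procedure the paper carries out in Lemma~\ref{lemma:finite diam implies sink} and Theorem~\ref{theo:duque-inf2}; your (iii)~$\Rightarrow$~(iv) induction matches the $\A^{<n>}\subset\overline{\span}\{e_k:k\in D^{n-1}(\mathbb{N})\}$ mechanism of Corollary~\ref{lem:Angraph}. So there is nothing to compare against in this paper beyond these pointers, and your reconstruction agrees with the referenced approach.
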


The previous theorem gains in interest if we realize that nilpotency of the algebra can be studied from the associated digraph. We shall see that it is also true for the infinite-dimensional case. However, some equivalences of the theorem are not satisfied when we consider arbitrary Hilbert evolution algebras, as evidenced by the following results.

 Let $\A$  a Hilbert evolution algebra with an orthonormal natural basis $\mathcal{B}= \{e_i\}_{i\in \mathbb{N}}$. Let us note that,  by the definition of the $G(\A, \mathcal{B})$,  $D^1(i)= \{k\in \mathbb{N}: w_{ik} \not= 0\},$
and consequently, we can write $e^2_{i}= \sum_{k\in D^1(i)}w_{ik}e_k,$ for all $i\in\mathbb{N}.$ Moreover, if $U\subset\mathbb{N}$ then
$$D^m(U)= \{k \in \mathbb{N}: w_{ik} \not= 0 \text{ and } i\in D^{m-1}(U)\},$$
and we can write 
\begin{equation}
\label{eq:span of ei^2}
e^2_{i}= \sum_{k\in D^m(i)}w_{ik}e_k,
\end{equation}
for all $i\in D^{m-1}(U).$


\bigskip
\begin{proposition}\label{prop:seila}
Let $\A$ be a Hilbert evolution algebra with a  natural  orthonormal basis $\{e_i\}_{i\in\mathbb{N}}$. If $U\subset\mathbb{N}$ and $I:=\overline{\span}\{e_k:k\in U\}$ then
\begin{equation}\nonumber 
\label{eq:lema1 1}
I^{<n>} \subset\overline{\span}\{e_k:k \in D^{n-1}(U)\} \text{ for } n\in\mathbb{N}.
\end{equation}
\end{proposition}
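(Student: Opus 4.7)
The plan is to proceed by induction on $n$. For the base case $n=1$, we have $I^{<1>} = I$ and $D^0(U) = U$ by the convention stated in Section \ref{preliminaries}, so the inclusion is in fact an equality and nothing more is needed.

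For the inductive step, assume $I^{<n>} \subset \overline{\span}\{e_k : k \in D^{n-1}(U)\}$. An arbitrary element of $I^{<n+1>}$ is a finite sum $\sum_i x_i y_i$ with $x_i \in I^{<n>}$ and $y_i \in I$, so it suffices to verify $x \cdot y \in \overline{\span}\{e_\ell : \ell \in D^n(U)\}$ for a single pair $x,y$. By the inductive hypothesis and the definition of $I$, I would write $x = \sum_{k \in D^{n-1}(U)} a_k e_k$ and $y = \sum_{j \in U} b_j e_j$ as norm-convergent series in $\A$.

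Now I would invoke the continuity hypothesis of Definition \ref{def:evolalg}: since $L_x$ is continuous, passing the limit outside gives $x \cdot y = \lim_N \sum_{j \in U,\, j \leq N} b_j\, (x \cdot e_j)$. Because the product is commutative (from $e_i \cdot e_j = e_j \cdot e_i$ on the basis, extended bilinearly), $x \cdot e_j = L_{e_j}(x)$, and a second application of continuity through $L_{e_j}$ together with the orthogonality $e_k \cdot e_j = \delta_{kj}\, e_j^2$ collapses the double series to $x \cdot e_j = a_j\, e_j^2$ whenever $j \in D^{n-1}(U)$, and $0$ otherwise. Using \eqref{eq:span of ei^2} in the form $e_j^2 = \sum_{\ell \in D^1(j)} w_{j\ell}\, e_\ell$, each nonzero contribution to $x \cdot y$ is supported on indices $\ell \in D^1(j)$ with $j \in U \cap D^{n-1}(U) \subset D^{n-1}(U)$; since $D^n(U) = \bigcup_{k \in D^{n-1}(U)} D^1(k)$, every such $\ell$ lies in $D^n(U)$. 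Consequently every partial sum, hence the limit $x \cdot y$, belongs to $\overline{\span}\{e_\ell : \ell \in D^n(U)\}$.

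The main subtlety I anticipate is the double passage to limits: Definition \ref{def:evolalg} grants only separate continuity through the operators $L_v$, so I must apply continuity first through $L_x$ (for the expansion of $y$) and then through $L_{e_j}$ (for the expansion of $x$, which requires commutativity). The orthogonality relation $e_i \cdot e_j = 0$ for $i \neq j$ is what makes the resulting formal double series collapse to a combination supported on the single generation $D^n(U)$, and it is this collapse that ultimately drives the inclusion.
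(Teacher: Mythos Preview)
Your proof is correct and follows essentially the same inductive approach as the paper: reduce to a single product $x\cdot y$ with $x\in I^{<n>}$ and $y\in I$, expand both in the natural basis, use the orthogonality relations to collapse the product to a combination of $e_k^2$ with $k\in D^{n-1}(U)$, and conclude via \eqref{eq:span of ei^2}. If anything, your treatment is more careful than the paper's, which simply writes the formal identity $u\cdot w=\sum_{k\in D^{n-1}(U)}\alpha_k\beta_k\,e_k^2$ without explicitly invoking the continuity of the left multiplications $L_v$ to justify passing the infinite sums through the product.
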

\begin{proof} 
The proof is by induction on $n$. The case $n=1$ is direct since $I^{<1>} = I = \overline{\span}\{e_k:k \in U\}.$ Assume that the statement holds for any $k\leq n$. Let us prove it for $n+1$. Take $v\in I^{<n+1>}$, then $v=u\cdot w$ for $u\in I^{<n>}$ and $w\in I$. Hence, 
we have $$u\cdot w =\left(\sum_{k\in D^{n-1}(U)} \alpha_k e_k\right) \left(\sum_{j=1}^\infty  \beta_j e_j\right)=\sum_{k\in D^{n-1}(U)} \alpha_k\beta_k e_k^2.$$
That is
$$I^{<n+1>}=I^{<n>}I \subset \overline{\span}\{e_k^2:k \in D^{n-1}(U)\}\subset\overline{\span}\{e_k:k \in D^{n}(U)\},$$
where we use Equation \eqref{eq:span of ei^2}.




\end{proof}

\bigskip
\begin{corollary}\label{lem:Angraph}
    Let $\A$ be a Hilbert evolution algebra with a natural orthonormal basis $\{e_i\}_{i\in\mathbb{N}}$. Then, for any $n\in\mathbb{N}$
\begin{equation}\nonumber
    \A^{<n>} \subset \overline{\span}\{e_k:k \in D^{n-1}(\mathbb{N})\}.
\end{equation}
\end{corollary}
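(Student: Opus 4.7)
The plan is to recognize the corollary as the special case $U = \mathbb{N}$ of Proposition \ref{prop:seila}. Since $\mathcal{B} = \{e_i\}_{i\in\mathbb{N}}$ is an orthonormal basis of the separable Hilbert space underlying $\A$, every element of $\A$ is a norm-limit of finite linear combinations of the $e_i$, so
\[
\A = \overline{\span}\{e_k : k \in \mathbb{N}\}.
\]
Thus, taking $U := \mathbb{N}$ in Proposition \ref{prop:seila} gives $I := \overline{\span}\{e_k : k \in U\} = \A$, and consequently $I^{<n>} = \A^{<n>}$ for every $n \in \mathbb{N}$, directly from the inductive definition $I^{<n+1>} := I^{<n>} I$ applied to $I = \A$.

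Substituting this identification into the conclusion of the proposition yields
\[
\A^{<n>} = I^{<n>} \subset \overline{\span}\{e_k : k \in D^{n-1}(\mathbb{N})\},
\]
which is precisely the statement of the corollary. The proof therefore consists only in identifying the correct specialization of the hypotheses of the preceding proposition, without any further induction or computation.

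Because the real content has already been absorbed into Proposition \ref{prop:seila}, there is essentially no obstacle here; the only point worth double-checking is the harmless identification $I^{<n>} = \A^{<n>}$ when $I = \A$, which follows immediately from the definitions. In short, the corollary is a one-line deduction, and the interesting work was done in establishing the proposition.
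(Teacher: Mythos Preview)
Your proof is correct and follows exactly the same approach as the paper: both simply specialize Proposition~\ref{prop:seila} with $U=\mathbb{N}$, so that $I=\A$ and $I^{<n>}=\A^{<n>}$.
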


\begin{proof} If we put  $U= \mathbb{N}$ in the statement of Proposition \ref{prop:seila}, then  $I=\A$ and our assertion follows.
\end{proof}

Corollary \ref{lem:Angraph} provides an intuitive criterion, from $G(\A,\mathcal{B})$, for the right nilpotency of a Hilbert evolution algebra. Indeed, note that if $G(\A,\mathcal{B})$ is such that there exists $n\in\mathbb{N}$ such that $D^{n-1}(\mathbb{N})=\emptyset$ then $\A^{<n>}=0$. This intuition will be useful to analyse the conditions of Theorem \ref{theo:duque} in the context of Hilbert evolution algebras. Let us start with some examples.


\bigskip
\begin{example}
Examples where $G(\A,\mathcal{B})$ has no oriented cycles but $\A$ is not nilpotent. 
\begin{enumerate}
    \item Consider the algebra $\A$ from Example \ref{ex:tree-HEA}. In Figure \ref{fig:tree-HEA} we illustrate the associated digraph $G(\A, \mathcal{B})$, which is a rooted $r$-ary tree. For the sake of simplicity take $r=2$, and let $v=\sum_{k\in \Lambda} \alpha_k e_k \in \A$, with $\alpha_k \neq 0$ for any $k\in\Lambda$. It is not difficult to see that $0\neq v^n\in A^{<n>}$ for any $n\in \mathbb{N}$. This can be proved by induction by noting that
    $$v^2=v\cdot v=\sum_{k\in \Lambda}\alpha_k^2 e_k^2=\sum_{k\in \Lambda\setminus \{1\}} \alpha_k^2(c_{k,k1}e_{k1}+c_{k,k2}e_{k2}).$$
    
    \smallskip
    \item Consider the Hilbert evolution algebra from Example \ref{exa:nonLF}. In Figure \ref{fig:nonLF} we illustrate the associated digraph $G(\A, \mathcal{B})$, which is a non-locally finite graph with infinitely many vertices. In this case, take  $v=\sum_{k\in \mathbb{N}\setminus \{1\}} \alpha_k e_k \in \A$, with $\alpha_k \neq 0$ for $k\in\mathbb{N}\setminus \{1\}$. Note that $0\neq v^n \in \A^{n}$ for any $n\in\mathbb{N}$. Indeed,
    $$v^2=v\cdot v=\sum_{k\in\mathbb{N}\setminus \{1\}} \alpha_k^2 e_k^2= \sum_{k\in\mathbb{N}\setminus \{1,2\}} \alpha_{k-1}^2 e_k,$$
        $$v^3=v^2\cdot v=\left( \sum_{k\in\mathbb{N}\setminus \{1,2\}} \alpha_{k-1}^2 e_k\right) \left(\sum_{k\in\mathbb{N}\setminus \{1\}} \alpha_k e_k\right)=\sum_{k\in\mathbb{N}\setminus \{1,2\}} \alpha_{k-1}^2 \alpha_k e_k^2,$$
        so
        $$v^3=\sum_{k\in\mathbb{N}\setminus \{1,2,3\}} \alpha_{k-2}^2 \alpha_{k-1} e_k,$$
        and, in general
        $$v^n = \sum_{k\in\mathbb{N}\setminus [n]} \alpha_{k-n+1}^2 \left\{\prod_{i=1}^{k-n+2} \alpha_{i} \right\}e_k ,$$
where $[n]:=\{1,\ldots,n\}$.
    
\end{enumerate}
\end{example}

For the following results we recall some definitions.
Given an associated weighted digraph $G(\A, \mathcal{B})=(V,E)$, for any $i\in V$,
we define $G_i(\A, \mathcal{B}):=(V_i, E_i)$, with $V_i:=D(i)$ and $E_i=\{(k, \ell): k, \ell \in V_i  \text{ and } (k, \ell)\in E\}$ and $\delta(G_i(\A, \mathcal{B})):=\sup\{\mathrm{dist}(i,u):u\in D(i)\}$ which is the depth of $G_i(\A, \mathcal{B})$.

\bigskip
\begin{theorem}\label{theo:duque-inf}
Let $\A$ be a Hilbert evolution algebra with an orthonormal natural basis $\mathcal{B}=\{e_i\}_{i\in\mathbb{N}}$ and associated digraph $G(\A, \mathcal{B})=(\mathbb{N}, E)$. Then, $\A$ is nilpotent if, and only if, the following conditions are satisfied:
\begin{enumerate}[label=(\roman*)]
\item \label{theo:duque-inf-1} there are no oriented cycles in $G(\A, \mathcal{B})$; 
\item  \label{theo:duque-inf-2} 
$\sup_{i\in \mathbb{N}} \delta\left(G_i(\A, \mathcal{B})\right)<\infty$.
\end{enumerate}
\end{theorem}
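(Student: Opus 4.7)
For the forward implication, I would assume $\A$ is nilpotent, so $\A^{<n_0>}=0$ for some $n_0$. The engine is the following computation, valid because $e_i\cdot e_j=0$ for $i\neq j$ and $e_i^2=\sum_k c_{ik}e_k$: along any oriented path $v_0\to v_1\to\cdots\to v_m$ in $G(\A,\mathcal{B})$ (whose edges correspond to non-zero structural constants),
\[
(\cdots(e_{v_0}^2\cdot e_{v_1})\cdots)\cdot e_{v_m}
=\Bigl(\prod_{j=0}^{m-1}c_{v_jv_{j+1}}\Bigr)\,e_{v_m}^2,
\]
which is non-zero and lies in $\A^{<m+2>}$. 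If condition (i) failed, traversing a cycle $i_1\to\cdots\to i_k\to i_1$ exactly $m$ times would produce $P^m\,e_{i_1}^2\in\A^{<mk+2>}\setminus\{0\}$, where $P$ is the product of structural constants around the cycle; for $m$ large this contradicts $\A^{<n_0>}=0$. If (ii) failed, for every $n$ there would exist $i$ and $u\in D(i)$ with $\mathrm{dist}(i,u)\geq n$; the shortest realizing path is automatically simple, and the same construction applied along it yields a non-zero element of $\A^{<n+2>}$, again contradicting nilpotency.

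For the backward implication, I would assume (i) and (ii) hold and set $N:=\sup_{i\in\mathbb{N}}\delta(G_i(\A,\mathcal{B}))<\infty$. By Corollary \ref{lem:Angraph}, it suffices to exhibit an $m\in\mathbb{N}$ with $D^{m-1}(\mathbb{N})=\emptyset$, which then forces $\A^{<m>}=0$. Using (i) to ensure every oriented path in $G(\A,\mathcal{B})$ is simple and (ii) to cap how deeply any vertex can sit below each of its ancestors, one argues that no oriented path in $G(\A,\mathcal{B})$ can have length exceeding $N$. Consequently $D^{N+1}(\mathbb{N})=\emptyset$, whence $\A^{<N+2>}=0$ and $\A$ is nilpotent.

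The main obstacle is precisely the last step: passing from the eccentricity bound $\sup_i\delta(G_i)\leq N$ to a uniform bound on the length of all oriented paths. Since $\mathrm{dist}(i,u)$ is the \emph{shortest-path} distance, a descendant $u$ of $i$ reachable via a single shortcut edge could a priori also be reached through a long simple chain, so acyclicity alone does not immediately translate an eccentricity bound into a path-length bound. The two hypotheses must therefore be combined delicately---for instance, by walking along a hypothetical path $v_0\to v_1\to\cdots\to v_M$ with $M>N$ and exploiting that each intermediate vertex must lie within its own ancestors' eccentricity balls---to rule out configurations that would evade both conditions.
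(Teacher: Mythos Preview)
Your forward implication is sound and arguably cleaner than the paper's: rather than computing powers $v^n$ of a carefully chosen element, you exhibit explicit non-zero iterated products lying in $\A^{<m+2>}$ along any path of length $m$. One small repair: the displayed product equals $(\prod c_{v_jv_{j+1}})\,e_{v_m}^2$, which vanishes if $v_m$ happens to be a sink; terminating one step earlier at $v_{m-1}$, which has an out-edge and hence $e_{v_{m-1}}^2\neq 0$, yields a non-zero element of $\A^{<m+1>}$, and the argument goes through unchanged.

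The obstacle you flag in the backward implication is not merely delicate---it cannot be overcome, because with $\delta(G_i)=\sup\{\mathrm{dist}(i,u):u\in D(i)\}$ defined via shortest-path distance (as in the paper), the implication is \emph{false}. Take $\A$ with orthonormal natural basis $\{e_i\}_{i\in\mathbb N}$ and $e_i^2=2^{-i}\sum_{j=1}^{i-1}e_j$; the continuity of every $L_v$ is a routine Cauchy--Schwarz estimate. The associated digraph has an edge $i\to j$ for every $j<i$: it is acyclic, and since every descendant of $i$ lies at distance $1$, one has $\sup_i\delta(G_i)=1$. Yet $e_1=4e_2^2\in\A^{<2>}$, and inductively $e_k=2^{k+1}e_{k+1}^2-\sum_{j<k}e_j\in\A^{<2>}$ for all $k$; iterating via $\A^{<n>}=\A^{<n-1>}\A$ then gives $e_k\in\A^{<n>}$ for every $n$, so $\A$ is not nilpotent. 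The paper's own proof simply asserts that (i) and (ii) force $D^{n-1}(\mathbb N)=\emptyset$ for large $n$---precisely the step you could not justify---and the chain $n\to n-1\to\cdots\to 1$ in this example shows that assertion fails. The statement (and both arguments) become correct if $\delta(G_i)$ is reinterpreted as the length of the longest directed path issuing from $i$.
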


\begin{proof} 
Let $\A$ be a nilpotent algebra. The proof is by contradiction. The same arguments from the proof of \cite[Theorem 3.4]{Elduque/Labra/2015} apply to show that if $G(\A, \mathcal{B})$ has oriented cycles then $\A$ is not nilpotent. Indeed, let us assume that there exist an oriented cycle in $G(\A, \mathcal{B})$. Without loss of generality, assume that an oriented cycle of minimum length is the path $\{1,2,\ldots,k,1\}$.  By taking $v=\sum_{i=1}^k e_i$, the arguments of the aforementioned reference state that we have $0\neq v^n \in \A^n $, for any $n\in \mathbb{N}$. 
Now suppose that $\sup_{i\in \mathbb{N}} \delta\left(G_i(\A, \mathcal{B})\right)=\infty.$ If $\delta\left(G_i(\A, \mathcal{B})\right)<\infty$ for any $i$, there exist a sequences of positive integers $(i_k)_{k\in\mathbb{N}}$, such that $\delta\left(G_{i_k}(\A,\mathcal{B})\right)=n_{i_k},$ with $n_{i_1}<n_{i_2}<\cdots <n_{i_k} < \cdots$. Then, by considering $v=\sum_{k\in\mathbb{N}} \alpha_k e_{i_k}\in \A$, where the ${\alpha_k}$'s are not all zero, we conclude that $0\neq v^n \in \A^{<n>}$ for any $n\in\mathbb{N}$. On the other hand, if $\delta\left(G_i(\A, \mathcal{B})\right)=\infty$ for some $i\in\mathbb{N}$, then there exists a ray $\{v_1,v_2,\ldots \}$ contained in $G_i(\A, \mathcal{B})$. In this case, it is enough to take $u=\sum_{k\in\mathbb{N}} \beta_k e_{v_k}\in \A$ with $\beta_k\neq 0$, to conclude that $0\neq u^n \in \A^{<n>}$ for any $n\in\mathbb{N}$.



For the reciprocal, suppose that $\A$ is such that $G(\A, \mathcal{B})$ satisfies conditions \ref{theo:duque-inf-1} and \ref{theo:duque-inf-2}. By \ref{theo:duque-inf-2} there exists a constant $M<\infty$ such that $\delta\left(G_i(\A, \mathcal{B})\right)<M$ for all $i\in \mathbb{N}$. This, in turn, together with \ref{theo:duque-inf-1}, implies  $\A^{<n>}=0$ for any $n>M$. Indeed, by Corollary \ref{lem:Angraph} $\A^{<n>} \subset \overline{\span}\{e_k:k \in D^{n-1}(\mathbb{N})\},$ but \ref{theo:duque-inf-1} and \ref{theo:duque-inf-2} imply that  $\{e_k:k \in D^{n-1}(\mathbb{N})\}=\emptyset$ for any $n>M$. The proof is complete by noticing that $\A^{2^n}\subset \A^{<n>}$ for any $n\geq 1$.

\end{proof}

The results of Theorem \ref{theo:duque} (\cite[Theorem 3.4]{Elduque/Labra/2015}) are related to well-known results coming from Graph Theory for finite graphs. As far as we know, nilpotency on infinite locally-finite digraphs has been studied by \cite{anckutty}, see also \cite{ancykutty0}, so our approach may be useful to extend results in that direction as well. 

\bigskip
\begin{corollary}
Let $\A$ be a right nilpotent Hilbert evolution algebra with an orthonormal natural basis $\mathcal{B}=\{e_i\}_{i\in\mathbb{N}}$. Then, the index of right nilpotency of $\A$ is given by 
$$n_r(\A)=\sup_{i\in \mathbb{N}} \delta\left(G_i(\A, \mathcal{B})\right)+1.$$

\end{corollary}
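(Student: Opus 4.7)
My plan is to establish the equality via matching upper and lower bounds. Set $M := \sup_{i\in\mathbb{N}} \delta(G_i(\A,\mathcal{B}))$; since $\A$ is right nilpotent, Theorem \ref{theo:duque-inf} guarantees $M<\infty$ and that $G(\A,\mathcal{B})$ contains no oriented cycles.

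For the upper bound $n_r(\A) \le M+1$, I would reuse the final step in the proof of Theorem \ref{theo:duque-inf}. By Corollary \ref{lem:Angraph},
$$\A^{<M+1>} \subset \overline{\span}\{e_k : k \in D^{M}(\mathbb{N})\}.$$
Combining the uniform depth bound $\delta(G_i)\le M$ with the acyclicity of $G(\A,\mathcal{B})$ forces the set $D^{M}(\mathbb{N})$ to be empty, hence $\A^{<M+1>} = 0$ and therefore $n_r(\A)\le M+1$.

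For the lower bound $n_r(\A) \ge M+1$, the plan is to exhibit a non-zero element of $\A^{<M>}$. First assume the supremum is attained at some $i_0$, so there is an oriented path $i_0 = v_0 \to v_1 \to \cdots \to v_{M-1}$ along which all structural constants $c_{v_k, v_{k+1}}$, $0\le k\le M-2$, are non-zero. Define iteratively $g_2 := e_{v_0}^2$ and $g_{n+1} := g_n \cdot e_{v_{n-1}}$ for $2\le n\le M-1$. Since $e_j \cdot e_k = 0$ whenever $j\neq k$, an easy induction yields
$$g_n = \left(\prod_{k=0}^{n-3} c_{v_k, v_{k+1}}\right) e_{v_{n-2}}^2 \in \A^{<n>},$$
which, upon expanding the square via $e_{v_{n-2}}^2 = \sum_j c_{v_{n-2}, j} e_j$, contains the non-vanishing summand $\left(\prod_{k=0}^{n-2} c_{v_k, v_{k+1}}\right) e_{v_{n-1}}$. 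Setting $n = M$ produces $g_M \in \A^{<M>}\setminus\{0\}$, so $n_r(\A) \ge M+1$. If the supremum is not attained, pick a sequence $(i_k)$ with $\delta(G_{i_k}) \to M$; for $k$ sufficiently large one has $\delta(G_{i_k}) \ge M-1$, and the same construction applies verbatim along a realizing path starting at $i_k$.

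The main obstacle is justifying the upper bound cleanly: translating the distance-based bound $\delta(G_i) \le M$ into the walk-based emptiness $D^M(\mathbb{N}) = \emptyset$ is delicate in general acyclic digraphs, because a walk of length $M$ may connect vertices whose shortest-path distance is strictly smaller; one must therefore exploit the specific structure forced by right nilpotency in the infinite setting, essentially arguing that in the presence of a uniformly bounded depth and no cycles no arbitrarily long walk can survive.
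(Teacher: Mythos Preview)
Your two-sided strategy is the right shape, and your lower-bound construction via the iterated products $g_n$ along a realizing path is correct (the case ``supremum not attained'' is in fact vacuous, since the $\delta(G_i)$ are integers bounded by $M$). The upper bound, however, has a genuine gap --- exactly the one you flag in your closing paragraph, and it is not a delicacy that can be smoothed over. Take the acyclic digraph on three vertices with edges $1\to 2$, $1\to 3$, $2\to 3$ and non-zero structural constants. Then $\mathrm{dist}(1,3)=1$, so $\delta(G_1)=\delta(G_2)=1$ and $M=1$; yet $D^{1}(\mathbb{N})=\{2,3\}\ne\emptyset$, and one computes $\A^{<2>}\ne 0$, $\A^{<3>}\ne 0$, $\A^{<4>}=0$, so $n_r(\A)=4\ne M+1=2$. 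Even the minimal example $e_1^2=e_2$, $e_2^2=0$ already has $M=1$ but $n_r(\A)=3$. The inference ``$\delta(G_i)\le M$ and acyclic $\Rightarrow D^{M}(\mathbb{N})=\emptyset$'' therefore fails, because $\delta$ is defined in the paper via \emph{shortest} paths, whereas membership in $D^{m}$ records the existence of a directed walk (in a DAG, a path) of length \emph{exactly} $m$.

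The paper's own proof is much terser than yours --- it simply invokes Theorem~\ref{theo:duque-inf} to secure finiteness of $M$ and acyclicity, and then appeals to Corollary~\ref{lem:Angraph} --- and it does not confront this point either; so what you have uncovered is a defect in the statement of the corollary rather than a weakness peculiar to your approach. If one replaces $\delta(G_i)$ by the length of the longest directed path emanating from $i$, both halves of your argument go through: Corollary~\ref{lem:Angraph} supplies the upper bound and your product along a maximizing path supplies the lower bound. Note, though, that even then there is an off-by-one relative to the stated formula: a path of length $L$ produces $0\ne g_{L+1}\in\A^{<L+1>}$, while $D^{L+1}(\mathbb{N})=\emptyset$ forces $\A^{<L+2>}=0$, so the index comes out as $L+2$ with $L$ the supremal longest-path length.
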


\begin{proof}
Since $\A$ is right nilpotent, then $\A$ is nilpotent and by Theorem \ref{theo:duque-inf} 
$$\sup_{i\in \mathbb{N}} \delta\left(G_i(\A, \mathcal{B})\right)<\infty,    $$
for any $i\in \mathbb{N}$. Recall that the index of right nilpotency given by $n_r(\A):=\min\{n\in\mathbb{N}:\A^{<n>}=0\}$,
so since there are no oriented cycles in $G(\A, \mathcal{B})$, the proof is a consequence of Corollary \ref{lem:Angraph}.

\end{proof}


Another consequence of Theorem \ref{theo:duque} is the equivalence of nil and nilpotent for the finite case. However, although both conditions can be satisfied for infinite-dimensional Hilbert evolution algebras, this is not always true. 

\bigskip
\begin{example}\label{exa:nil-nilp}
($\A$ nil and nilpotent) Consider the Hilbert evolution algebra $\mathcal{A}$ whose resulting digraph $G(\A,\mathcal{B})$ is given by Figure \ref{fig:nil-nilpotent}. Let $v\in\A$ and note that a simple calculation shows that we always have $v^4=0$. Thus $\A$ is nil and nilpotent. Indeed, note that $G(\A, \mathcal{B})$ has no oriented cycles and that $\sup_{i\in \mathbb{N}} \delta\left(G_i(\A, \mathcal{B})\right)<\infty$ so nilpotency is also obtained by Theorem \ref{theo:duque-inf}. 
\end{example}

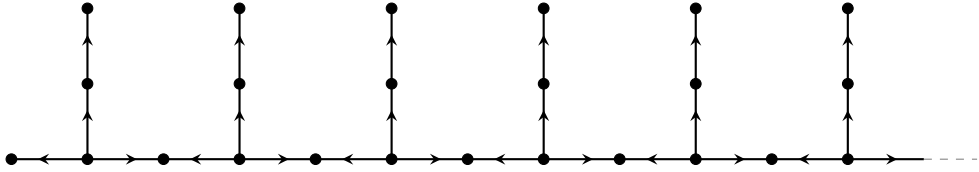
\begin{figure}[h!]
    \centering
    \begin{tikzpicture}

\draw[thick,directed] (-2,0) -- (-1,0);
\draw[thick,directed] (-2,0) -- (-3,0);
\draw[thick,directed] (0,0) -- (-1,0);
\draw[thick,directed] (0,0) -- (1,0);
\draw[thick,directed] (2,0) -- (1,0);
\draw[thick,directed] (2,0) -- (3,0);
\draw[thick,directed] (4,0) -- (3,0);
\draw[thick,directed] (4,0) -- (5,0);
\draw[thick,directed] (6,0) -- (5,0);
\draw[thick,directed] (6,0) -- (7,0);
\draw[thick,directed] (8,0) -- (7,0);
\draw[thick,directed] (8,0) -- (9,0);
\draw[dashed,gray] (9,0) -- (9.7,0);
\draw[thick,directed] (-2,0) -- (-2,1);
\draw[thick,directed] (-2,1) -- (-2,2);
\draw[thick,directed] (0,0) -- (0,1);
\draw[thick,directed] (0,1) -- (0,2);
\draw[thick,directed] (2,0) -- (2,1);
\draw[thick,directed] (2,1) -- (2,2);
\draw[thick,directed] (4,0) -- (4,1);
\draw[thick,directed] (4,1) -- (4,2);
\draw[thick,directed] (6,0) -- (6,1);
\draw[thick,directed] (6,1) -- (6,2);
\draw[thick,directed] (8,0) -- (8,1);
\draw[thick,directed] (8,1) -- (8,2);
\filldraw [black] (-2,2) circle (2pt); 
\filldraw [black] (0,2) circle (2pt); 
\filldraw [black] (-2,0) circle (2pt); 
\filldraw [black] (-3,0) circle (2pt); 
\filldraw [black] (-2,1) circle (2pt); 
\filldraw [black] (2,1) circle (2pt); 
\filldraw [black] (2,2) circle (2pt); 
\filldraw [black] (-1,0) circle (2pt); 
\filldraw [black] (0,0) circle (2pt);  
\filldraw [black] (1,0) circle (2pt);   
\filldraw [black] (2,0) circle (2pt);   
\filldraw [black] (3,0) circle (2pt);    
\filldraw [black] (4,0) circle (2pt);    
\filldraw [black] (5,0) circle (2pt);
\filldraw [black] (6,0) circle (2pt);    
\filldraw [black] (7,0) circle (2pt);    
\filldraw [black] (0,1) circle (2pt);   
\filldraw [black] (4,1) circle (2pt);
\filldraw [black] (4,2) circle (2pt);
\filldraw [black] (6,1) circle (2pt);
\filldraw [black] (6,2) circle (2pt);
\filldraw [black] (8,0) circle (2pt);
\filldraw [black] (8,1) circle (2pt);
\filldraw [black] (8,2) circle (2pt);
    \end{tikzpicture}
    \caption{Associated digraph for a Hilbert evolution algebra which is nil and nilpotent.}
    \label{fig:nil-nilpotent}
\end{figure}

\bigskip
\begin{example}
(Nil does not implies nilpotent) Consider the Hilbert evolution algebra $\mathcal{A}$ whose associated digraph $G(\A, \mathcal{B})$ is given by Figure \ref{fig:diam-go-inf}. Observe that, like Example \ref{exa:nil-nilp}, a simple calculation shows that we have $v^{k+2}=0$ for any $v\in G_{i_k}$. Thus $\A$ is nil. However, although there are no oriented cycles in $G(\A, \mathcal{B})$ nor rays, we have a sequence of positive integers $i_1,i_2,\ldots$ such that $\delta\left(G_{i_k}(\A, \mathcal{B})\right)=k$ so $\sup_{i\in \mathbb{N}} \delta\left(G_i(\A, \mathcal{B})\right)=\infty$. Then, by Theorem \ref{theo:duque-inf} we have that $\mathcal{A}$ is not nilpotent. 
\end{example}

\begin{figure}[h!]
    \centering
    \begin{tikzpicture}

\draw[thick,directed] (-2,0) -- (-1,0);
\draw[thick,directed] (-2,0) -- (-3,0);
\draw[thick,directed] (0,0) -- (-1,0);
\draw[thick,directed] (0,0) -- (1,0);
\draw[thick,directed] (2,0) -- (1,0);
\draw[thick,directed] (2,0) -- (3,0);
\draw[thick,directed] (4,0) -- (3,0);
\draw[thick,directed] (4,0) -- (5,0);
\draw[thick,directed] (6,0) -- (5,0);
\draw[thick,directed] (6,0) -- (7,0);
\draw[thick,directed] (8,0) -- (7,0);
\draw[thick,directed] (8,0) -- (9,0);
\draw[dashed,gray] (9,0) -- (9.7,0);
\draw[thick,directed] (-2,0) -- (-2,1);
\draw[thick,directed] (0,0) -- (0,1);
\draw[thick,directed] (0,1) -- (0,2);
\draw[thick,directed] (2,0) -- (2,1);
\draw[thick,directed] (2,1) -- (2,2);
\draw[thick,directed] (2,2) -- (2,3);
\draw[thick,directed] (4,0) -- (4,1);
\draw[thick,directed] (4,1) -- (4,2);
\draw[thick,directed] (4,2) -- (4,3);
\draw[thick,directed] (4,3) -- (4,4);
\draw[thick,directed] (6,0) -- (6,1);
\draw[thick,directed] (6,1) -- (6,2);
\draw[thick,directed] (6,2) -- (6,3);
\draw[thick,directed] (6,3) -- (6,4);
\draw[thick,directed] (6,4) -- (6,5);
\draw[thick,directed] (8,0) -- (8,1);
\draw[thick,directed] (8,1) -- (8,2);
\draw[thick,directed] (8,2) -- (8,3);
\draw[thick,directed] (8,3) -- (8,4);
\draw[thick,directed] (8,4) -- (8,5);
\draw[thick,directed] (8,5) -- (8,6);
\filldraw [black] (0,2) circle (2pt); 
\filldraw [black] (-2,0) circle (2pt); 
\filldraw [black] (-3,0) circle (2pt); 
\filldraw [black] (-2,1) circle (2pt); 
\filldraw [black] (2,1) circle (2pt); 
\filldraw [black] (2,2) circle (2pt); 
\filldraw [black] (2,3) circle (2pt); 
\filldraw [black] (-1,0) circle (2pt); 
\filldraw [black] (0,0) circle (2pt);    \node at (-2,-0.3) {\footnotesize $i_1$};     
\filldraw [black] (1,0) circle (2pt);     \node at (0,-0.3) {\footnotesize $i_2$}; 
\filldraw [black] (2,0) circle (2pt);     \node at (2,-0.3) {\footnotesize $i_3$}; 
\filldraw [black] (3,0) circle (2pt);    
\filldraw [black] (4,0) circle (2pt);     \node at (4,-0.3) {\footnotesize $i_4$}; 
\filldraw [black] (5,0) circle (2pt);
\filldraw [black] (6,0) circle (2pt);     \node at (6,-0.3) {\footnotesize $i_5$}; 
\filldraw [black] (7,0) circle (2pt);     \node at (8,-0.3) {\footnotesize $i_6$}; 
\filldraw [black] (0,1) circle (2pt);   
\filldraw [black] (4,1) circle (2pt);
\filldraw [black] (4,2) circle (2pt);
\filldraw [black] (4,3) circle (2pt);
\filldraw [black] (4,4) circle (2pt);
\filldraw [black] (6,1) circle (2pt);
\filldraw [black] (6,2) circle (2pt);
\filldraw [black] (6,3) circle (2pt);
\filldraw [black] (6,4) circle (2pt);
\filldraw [black] (6,5) circle (2pt);
\filldraw [black] (8,0) circle (2pt);
\filldraw [black] (8,1) circle (2pt);
\filldraw [black] (8,2) circle (2pt);
\filldraw [black] (8,3) circle (2pt);
\filldraw [black] (8,4) circle (2pt);
\filldraw [black] (8,5) circle (2pt);
\filldraw [black] (8,6) circle (2pt);
    \end{tikzpicture}
    \caption{Associated digraph for a Hilbert evolution algebra for which, given the algebra multiplication, the associated digraph satisfies $\sup_{i\in \mathbb{N}} \delta\left(G_i(\A,\mathcal{E})\right)=\infty$.}
    \label{fig:diam-go-inf}
\end{figure}
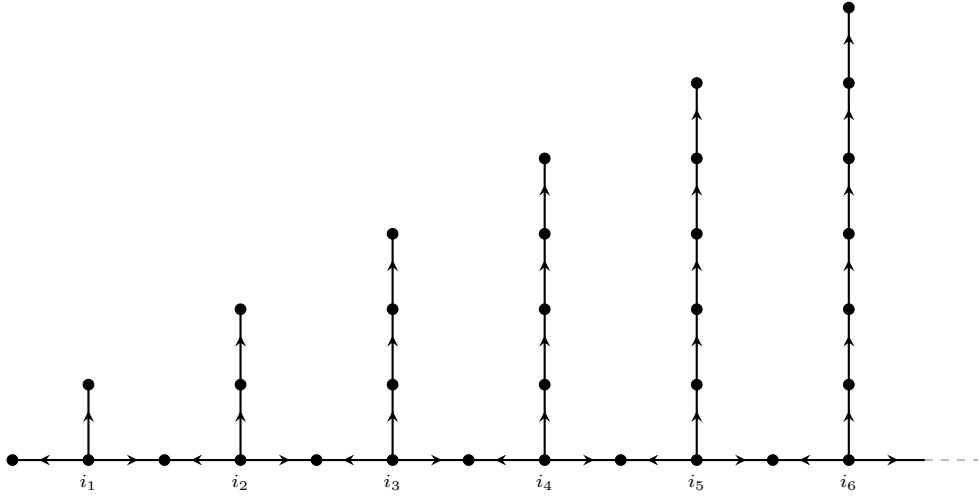





\bigskip
\begin{example}[Not nil and no oriented cycles]
Consider the Hilbert evolution algebra of Example \ref{exa:nonLF}, and let
$v=\sum_{i=1}^{\infty}v_i e_i$, with $v_i\neq 0$ for all $i\in\mathbb N$. Then,
$$v^2=\sum_{i=1}^{\infty}v_i^2 e_i^2=\sum_{i=2}^{\infty}\alpha_i e_i,$$
where $\alpha_2=v_1^2 c_{21}$, and $\alpha_i=v_1^2c_{i1}+v_{i-1}^2$ for $i\geq 3$. By induction on $k$, we can show that for any $k\in\mathbb{N}\setminus\{1\}$ there exist non-zero constants $\{\beta_i\}_{i\geq k}$ such that
$$v^{k}=\sum_{i=k}^{\infty}\beta_i e_i \neq 0.$$
Therefore $\A$ is not nil. Note that in this case, there are no oriented cycles in $G(\A, \mathcal{B})$ neither, see Figure \ref{fig:nonLF}. 
\end{example}

The previous examples suggest the following useful result.
\bigskip
\begin{lemma}
\label{lemma:finite diam implies sink}
Let $\A$ be a Hilbert evolution algebra with a natural orthonormal basis $\mathcal{B}=\{e_i\}_{i\in\mathbb{N}}$. Suppose then that there are no oriented cycles in $G(\A,\mathcal{B})$ and for some $i\in \mathbb{N}$ we have $\delta\left(G_{i}(\A,\mathcal{B})\right)<\infty$.
Then there exists a sink in $G_{i}(\A,\mathcal{B})$.
\end{lemma}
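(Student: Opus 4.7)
My plan is to exhibit a sink by selecting a vertex in the deepest non-empty generation of descendants of $i$. Set $N := \delta(G_i)$, which is finite by hypothesis. Combined with the no-cycles assumption, every directed path from $i$ is simple, and this forces the set $\{m \geq 0 : D^m(i) \neq \emptyset\}$ to be bounded; let $N$ be its maximum, so that $D^N(i) \neq \emptyset$ while $D^{N+1}(i) = \emptyset$.

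Then I would pick any $v \in D^N(i) \subseteq D(i) = V_i$ and fix a directed path $i = v_0 \to v_1 \to \cdots \to v_N = v$. I claim that $v$ is a sink in $G_i$. Suppose on the contrary that some out-neighbor $w \in D^1(v)$ exists. The sequence $v_0 \to v_1 \to \cdots \to v_N \to w$ is a directed walk of length $N+1$ from $i$. If $w \notin \{v_0, \ldots, v_N\}$, this walk is a simple path, so $w \in D^{N+1}(i)$, contradicting $D^{N+1}(i) = \emptyset$. If instead $w = v_j$ for some $0 \leq j \leq N$, then $v_j \to v_{j+1} \to \cdots \to v_N \to v_j$ is an oriented cycle, contradicting the no-cycles hypothesis. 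Either way we reach a contradiction, so $v$ has no out-neighbor in $G$; in particular, none in $G_i$, and hence $v$ is a sink of $G_i$.

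The main point to justify carefully is that the set $\{m : D^m(i) \neq \emptyset\}$ is indeed bounded under the stated hypotheses. This is where both ingredients are essential: the finiteness of $\delta(G_i)$ prevents directed paths starting at $i$ from having arbitrarily large length, while the absence of oriented cycles prevents a bounded collection of vertices from being threaded by arbitrarily long walks via recurrences. Once the boundedness is secured, the sink emerges immediately from the short extension/cycle-creation dichotomy described above.
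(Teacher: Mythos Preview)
Your strategy is the paper's: take a directed path from $i$ of maximal length and argue that its endpoint must be a sink, since any out-edge would either extend the path (violating maximality) or close a cycle (violating acyclicity). The paper simply chooses a path of length $\delta(G_i)$ and declares a further extension ``impossible''; you recast this through the generations $D^m(i)$ and, to your credit, explicitly isolate the boundedness of $\{m:D^m(i)\neq\emptyset\}$ as the point needing care.

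That point, however, does not go through. Your justification---that ``the finiteness of $\delta(G_i)$ prevents directed paths starting at $i$ from having arbitrarily large length''---conflates two different quantities: by definition $\delta(G_i)=\sup\{\mathrm{dist}(i,u):u\in D(i)\}$ is a supremum of \emph{shortest}-path distances, not a bound on the length of arbitrary simple paths from $i$. The algebra of Example~\ref{exa:nonLF} already shows the failure: vertex $1$ has a direct edge to every $k\geq 2$, so $\mathrm{dist}(1,k)=1$ for all $k\geq 2$ and hence $\delta(G_1)=1$; there are no oriented cycles; yet for every $m$ the simple path $1\to 2\to\cdots\to m+1$ witnesses $D^m(1)\neq\emptyset$, so your set is unbounded and $G_1$ contains no sink at all. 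The paper's proof skips over exactly the same step (the bare assertion that a path of length $\delta(G_i)+1$ is impossible), so you should not expect to close the gap with the hypotheses as stated; an extra assumption---for instance local finiteness of $G(\A,\mathcal{B})$, which together with $\delta(G_i)<\infty$ forces $D(i)\subseteq\bigcup_{m\leq\delta(G_i)}D^m(i)$ to be finite---is what actually makes the maximal-path argument go through.
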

\begin{proof}
Let $\gamma:=\{i,\ldots,j\}$ be a  (directed) path in $G_i(\A,\mathcal{B})$ with length $l_i:=\delta\left(G_{i}(\A,\mathcal{B})\right)$. 
Let us prove that $j$ is a sink in $G_{i}(\A,\mathcal{B})$.
Suppose that $j$ is not a sink, then there exist some $k\in \mathbb{N}, k\notin\gamma$, such that $\omega_{jk}\neq 0$, because there are no oriented cycles in $G(\A, \mathcal{B})$.
Hence  $\{i,\ldots,j, k\}$ is a path of length $l_i+1$ which is impossible.
\end{proof}

\bigskip
\begin{theorem}\label{theo:duque-inf2}
Let $\A$ be a Hilbert evolution algebra with a  natural orthonormal basis $\mathcal{B}=\{e_i\}_{i\in\mathbb{N}}$. Then, the following conditions are equivalent:
\begin{enumerate}[label=(\roman*)]
\item \label{item:duque-inf2 --1}
$\A$ is nil.
\item \label{item:duque-inf2 --2}
There are no oriented cycles in $G(\A, \mathcal{B})$, and $ \delta\left(G_i(\A, \mathcal{B})\right)<\infty$, for any $i\in \mathbb{N}$. 
\item \label{item:duque-inf2 --3}
The weighted adjacency operator $\Omega:\ell^2(\mathbb{N})\longrightarrow\ell^2(\mathbb{N})$ can be represented by an infinite matrix $\{\omega_{ij}\}_{i,j\in \mathbb{N}}$ with strictly lower triangular form. 
\end{enumerate}
\end{theorem}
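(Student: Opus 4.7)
The plan is to prove the equivalences by establishing $(ii)\Leftrightarrow(iii)$ (a purely combinatorial statement) and $(i)\Leftrightarrow(ii)$ (where the algebraic structure enters). The direction $(iii)\Rightarrow(ii)$ is immediate: strict lower triangularity says every edge $(i,k)\in E$ satisfies $k<i$, which rules out oriented cycles and forces $\delta(G_i)\leq i-1<\infty$.

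For $(ii)\Rightarrow(iii)$, the key observation is that $\ell(i):=\delta(G_i)$ is a finite rank function that is strictly decreasing along edges (any edge $(i,k)$ extends a longest path out of $k$ by one step to one out of $i$, so $\ell(i)\geq \ell(k)+1$). Stratifying $\mathbb{N}=\bigsqcup_{n\geq 0} V_n$ with $V_n:=\ell^{-1}(n)$, each $V_n$ is countable and $\bigcup_n V_n=\mathbb{N}$; enumerating vertices level by level yields a bijection $\sigma:\mathbb{N}\to\mathbb{N}$ under which every edge points to a strictly smaller label, so the matrix of $\Omega$ in the reordered basis is strictly lower triangular.

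For $(i)\Rightarrow(ii)$, I would argue by contrapositive, following the strategy in the proof of Theorem \ref{theo:duque}: if an oriented cycle $v_1\to\cdots\to v_k\to v_1$ exists, then $v:=e_{v_1}+\cdots+e_{v_k}$ satisfies $v^n\neq 0$ for every $n$; if $\delta(G_{i_0})=\infty$ for some $i_0$, one constructs a non-nil element supported on descendants of $i_0$, invoking K\"onig's lemma to extract a ray when $G_{i_0}$ is locally finite and relying on a careful selection of paths of each length otherwise.

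For $(ii)\Rightarrow(i)$, Proposition \ref{prop:seila} applied to $U=\supp(v)$ and $I=\overline{\span}\{e_j:j\in U\}$ gives $v^n\in I^{<n>}\subseteq \overline{\span}\{e_k:k\in D^{n-1}(U)\}$. When $\supp(v)$ is finite, $N:=\max_{i\in \supp(v)}\delta(G_i)<\infty$ and $D^n(U)=\emptyset$ for $n>N$, so $v^{N+2}=0$. The main obstacle is the infinite-support case: the depths along $\supp(v)$ may fail to be uniformly bounded and $D^{n-1}(U)$ need not become empty for any $n$. My plan is to combine Lemma \ref{lemma:finite diam implies sink} (so that each $G_i$ with finite depth contains a sink) with the continuity of $L_v$ guaranteed by the Hilbert evolution algebra axioms, approximating $v$ by finite-support truncations $v^{(N)}$, computing $(v^{(N)})^n$ for each $N$ via the finite case, and passing to the limit; this is the most delicate step and the place where I expect the proof to require the greatest care.
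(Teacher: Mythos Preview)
Your argument for $(ii)\Rightarrow(iii)$ contains a genuine error: the function $\ell(i):=\delta(G_i(\A,\mathcal{B}))$ need \emph{not} be strictly decreasing along edges. By definition $\delta(G_i)=\sup\{\mathrm{dist}(i,u):u\in D(i)\}$ is built from \emph{shortest}-path distances, not longest paths; with edges $i\to k$, $k\to j$ together with a shortcut $i\to j$ one gets $\delta(G_i)=\delta(G_k)=1$ although $(i,k)\in E$. Your parenthetical justification (``extends a longest path out of $k$\ldots'') conflates longest paths with the distance-based depth. Replacing $\ell$ by the longest-path length does restore strict monotonicity along edges, but then its finiteness under $(ii)$ is no longer automatic and must be argued separately. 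Moreover, even granting a strict integer rank, the ``level by level'' enumeration breaks down whenever some level $V_n$ is infinite: listing all of $V_0$ first already exhausts $\mathbb{N}$, and no labels remain for $V_1$. The paper avoids both difficulties by repeatedly extracting a sink via Lemma~\ref{lemma:finite diam implies sink} and assigning it the next available index, which yields the strictly lower triangular form directly.

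For $(ii)\Rightarrow(i)$ the approximation plan has a structural gap, not merely a delicacy. If $\supp(v)$ meets infinitely many branches of increasing depth (exactly the situation of Figure~\ref{fig:diam-go-inf}), the nilpotency indices of the truncations $v^{(N)}$ grow without bound. Continuity of each $L_w$ at best lets you pass to the limit in $(v^{(N)})^n$ for a \emph{fixed} $n$, but it cannot produce a single exponent $n$ with $v^n=0$ when the truncations require ever larger ones; so the limit argument simply does not close. The paper bypasses this by arguing the contrapositive: if $v^n\neq 0$ for all $n$, then Proposition~\ref{prop:seila} forces $D^{n-1}(\supp(v))\neq\emptyset$ for every $n$, and from that one extracts either an oriented cycle or a ray in $G(\A,\mathcal{B})$, contradicting $(ii)$. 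You should rework both implications along these lines rather than via the rank/approximation route.
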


\begin{proof}
\ref{item:duque-inf2 --1} $\Rightarrow$ \ref{item:duque-inf2 --2}. Assume that $\A$ is nil. The proof of that there are no oriented cycles in $G(\A,\mathcal{B})$ use the same argument of \cite[Theorem 3.4]{Elduque/Labra/2015}. Now we shall prove, by contradiction, that $ \delta\left(G_i(\A, \mathcal{B})\right)<\infty$, for any $i\in \mathbb{N}$.  Indeed, note that if there exists $i_0\in \mathbb{N}$ such that $ \delta\left(G_{i_0}(\A,\mathcal{B})\right)=\infty$, then there exists a ray starting in $i_0$, say $\gamma:=\{i_0,i_1,i_2, \ldots\}$, such that, if we let 
$$v=\sum_{\ell=0}^{\infty}\alpha_{\ell} e_{i_{\ell}}\in \A,$$
with $\alpha_\ell \neq 0$ for any $\ell$, then $v^{n}\neq 0$ for any $n\geq 1$. Indeed, note that
$$v^2 = \sum_{\ell=0}^{\infty}\alpha_{\ell}^2 e_{i_{\ell}}^2 = \sum_{\ell=1}^{\infty}\alpha_{\ell}^2 e_{i_{\ell}} + \sum_{\ell=1}^{\infty}\left\{\sum_{k_{\ell} \in D(i_{\ell-1})\setminus \{i_{\ell}\}} \alpha_{\ell}^2 e_{k_\ell}\right\},$$
which, by induction, implies that for any $n\geq 1$: 
\begin{equation}\label{eq:locura}
v^n = \sum_{\ell = n-1}^{\infty} \alpha_{\ell}^n e_{i_{\ell}} + u_{n}\neq 0,
\end{equation}
where $u_n \in \overline{\span}\{e_i:i\in D^{n}(\gamma)\setminus\{i_1,\ldots,i_n\}\}$. Note that from some $n$ we could have $u_n=0$. Therefore we conclude from \eqref{eq:locura} that $\A$ is not nil, which contradicts our assumption. 


\ref{item:duque-inf2 --2} $\Rightarrow$ \ref{item:duque-inf2 --1}. Assume that $\A$ is not nil. Then, there exist $v\in\A$ such that $v^n\neq 0$, for any $n\in\mathbb{N}$. Let us write $v=\sum_{k}\alpha_k e_k$ and define the set $U:=\{k\in\mathbb{N}: \alpha_k\neq 0\}$ and $I:=\overline{\span}\{e_k:k\in U\}$.
Thus, for any $n\in\mathbb{N}$, we have
\begin{equation}\nonumber
v^n\in I^{<n>} \subset\overline{\span}\{e_k:k \in D^{n-1}(U)\}
\end{equation}
where we use the Proposition \ref{prop:seila}. 
Hence $D^{n-1}(U)\neq\emptyset$, for all $n\in\mathbb{N}$. This implies the existence of an oriented cycle or a ray in $G(\A,\mathcal{B})$.
Indeed, if 
$$D^n(U)\cap D^m(U)\neq \emptyset$$
for some pair $n,m$, say $n<m$, it follows the existence of an oriented cycle $\gamma:=\{i_0,\ldots, i_k,\ldots, i_0\}$ where the starting vertex is some $i_0\in D^n(U)\cap D^m(U)$ and $i_k\in D^{k}(U)$, for $k$ such that $n<k<m$. On the other hand, if 
$$D^n(U)\cap D^m(U)= \emptyset$$
for all $n,m\in \mathbb{N}$ we can construct a ray $\gamma:=\{i_0,i_1,\ldots,i_n,\ldots\}$ taking vertices such that $i_0\in U$, $i_n\in D^n(U)$ for $n\geq 1$.

\ref{item:duque-inf2 --2} $\Rightarrow$ \ref{item:duque-inf2 --3}. Note that the weighted adjacency operator $\Omega:\ell^2(\mathbb{N})\longrightarrow\ell^2(\mathbb{N})$ 
is densely defined, thus can be represented by an infinite matrix $\{\omega_{ij}\}_{i,j\in \mathbb{N}}$.
By Lemma \ref{lemma:finite diam implies sink} the graph have a sink
in some $j_1$, i.e., $e_{j_1}^2=0$, i.e., $\omega_{j_1k}=0$ for all $k\in\mathbb{N}$. 
Thus, we can reorder the vertex set by assigning $1$ to $j_1$, then $\omega_{1k}=0$ for all $k\in\mathbb{N}$ 
and we have $e_1^2=0$ and the first row of the matrix only has zeros.
Now, the subgraph with vertex set $\{2,3,\ldots\}$ also satisfies the hypothesis of  
Lemma \ref{lemma:finite diam implies sink}, which implies that the subgraph have 
a sink in some $j_2$, i.e., $\omega_{j_2k}=0$ for all $k\geq 2$. 
Again, we reorder the vertex set by assigning $2$ to $j_2$, then $e_2^2=\omega_{21}e_1$ 
and we have that the second row of the matrix has the first term $\omega_{21}$
and the rest are zero.
Repeating the process, we can reorder the vertex set to have 
$e_3^2=\omega_{31}e_1+ \omega_{32}e_2$ and we have that the third row of the matrix has 
the first and second terms $\omega_{31}, \omega_{32}$ and the rest are zero.
By induction we can see that we can reorder the vertex set to have 
$$e_n^2=\omega_{n1}e_1+\cdots+ \omega_{n(n-1)}e_{n-1}$$ 
and we have that the nth row of the 
matrix has the first $n-1$ terms $\omega_{n1},\ldots,\omega_{n(n-1)}$ and the rest are zero.
Thus, the operator $\Omega$ can be represented by an infinite matrix with strictly lower triangular form.

\ref{item:duque-inf2 --3}$\Rightarrow$ \ref{item:duque-inf2 --2}. Suppose that the infinite matrix $\{\omega_{ij}\}_{i,j\in \mathbb{N}}$ has a strictly lower triangular form.
Then, for the corresponding  natural orthonormal basis $\{e_i\}_{i\in\mathbb N}$, we have
$e_1^2=0$ and
$$e_n^2=\omega_{n1}e_1+\cdots+ \omega_{n(n-1)}e_{n-1}, \,\, \text{ for all } n\geq 1.$$ 
This shows directly that,
\begin{equation} \label{eq:descendants of n}
D^1(n)\subset \{1,2,\ldots, n-1\}, \,\,\text{ for all } n\geq 1.
\end{equation}
From this, induction on $n$ shows that
$$D^m(n)=D(n-(m-1))\subset\{1,2,\ldots, n-m\},\,\,\text{ for } 1\leq m\leq n.$$
This, and $e_1^2=0$ implies $D^m(n)=\emptyset$ if $m>n$.
Now consider $G_n(\A,\mathcal{B})=(V_n,E_n)$, then we have
\begin{equation} \label{eq:strong component of n}
\{e_k\in\A: k\in V_n\}\subset\{e_1,e_2,\ldots, e_{n-1}\},
\end{equation}
for all $n\in\mathbb{N}$.
From \eqref{eq:strong component of n} we obtain that
$$\delta\left(G_n(\A, \mathcal{B})\right)\leq n-1<\infty,$$ for any $n\in \mathbb{N}$.   
Now, let us prove that are no oriented cycles in $G(\A, \mathcal{E})$.
Suppose there exists an oriented cycle $\gamma:=\{i_1,i_2,\ldots, i_m\}$ for some indices $i_k$
and some $m\in\mathbb{N}$.
Note that we can reorder the set of vertices to have
$$i_1<i_k, \text{ for } 2\leq k\leq m.$$
Now, by Equation \eqref{eq:descendants of n} we  know that $D^1(i_1)=\{1,2,\ldots, i_1-1\}$,
thus, $i_k\notin D(i_1)$ for $2\leq k\leq m$.
That is, $\gamma$ can not be an oriented cycle.
\end{proof}

\section{Acknowledgements}

This research is part of the research project ``Algebras of Evolution in Separable Hilbert Spaces'' (Res. R/10 218-2022 UNPSJB). A part of this research was conducted during the visits of S.V. to the Universidade Federal de Pernambuco (UFPE), P.M.R. to the Universidade Federal do ABC (UFABC), and P.C. to the UFPE. The authors are grateful to these institutions for their hospitality and support. Part of this work was supported by Fundação de Amparo à Ciência e Tecnologia do Estado de Pernambuco - FACEPE (Grant APQ-1341-1.02/22) and Fundação de Amparo à Pesquisa do Estado de São Paulo - FAPESP (Grant 2017/10555-0).  The authors would like to express their gratitude to Ancykutty Joseph for sharing her work on infinite digraphs. They would also like to extend their appreciation to the referee, whose meticulous reading of the manuscript and insightful comments contributed to the improvement of this paper.


\end{document}